\newcommand{\ii}{{\mathbf{i}}}
\newcommand{\jj}{\mathbf{j}}
\newcommand{\nn}{{\mathbf{n}}}
\newcommand{\xx}{{\mathbf{x}}}
\newcommand{\yy}{{\mathbf{y}}}
\renewcommand{\aa}{{\mathbf{a}}}
\newcommand{\hh}{{\mathbf{h}}}
\newcommand{\etab}{\boldsymbol{\eta}}
\newcommand{\bm}[1]{\mathbf{#1}}
\newcommand{\cA}{\mathcal{A}}
\newcommand{\cC}{\mathcal{C}}
\newcommand{\cE}{\mathcal{E}}
\newcommand{\cF}{\mathcal{F}}
\newcommand{\cG}{\mathcal{G}}
\newcommand{\cK}{\mathcal{K}}
\newcommand{\cI}{\mathcal{I}}
\newcommand{\cM}{\mathcal{M}}
\newcommand{\cN}{\mathcal{N}}
\newcommand{\cS}{\mathcal{S}}
\newcommand{\cU}{\mathcal{U}}
\newcommand{\cX}{\mathcal{X}}
\newcommand{\e}{{\mathrm{e}}}
\newcommand{\kk}{\mathsf{k}}
\newcommand{\R}{\mathbb{R}}
\DeclareMathOperator{\sspan}{span}
\newtheorem{example}{Example}
\date{\today}
\begin{document}

\title{Anti-Gauss cubature rules with applications to Fredholm integral equations on the square}

\author{Patricia D\'iaz de Alba\thanks{Department of Mathematics and Computer Science, University of Cagliari, via Ospedale 72, 09124 Cagliari, Italy, \texttt{patricia.diazda@unica.it, fermo@unica.it, rodriguez@unica.it}}
\and
Luisa Fermo\footnotemark[1] 
\and
Giuseppe Rodriguez\footnotemark[1]
}

\maketitle

\begin{abstract}
The purpose of this paper is to develop the anti-Gauss cubature rule for approximating integrals defined on the square whose integrand function may have algebraic singularities at the boundaries. An application of such a rule to the numerical solution of Fredholm integral equations of the second-kind is also explored.
The stability, convergence, and conditioning of the proposed  Nystr\"om-type method are studied. The numerical solution of the resulting dense linear system is also investigated and several numerical tests are presented. 
\end{abstract}

% REQUIRED
\begin{keywords}
Fredholm integral equation, Nystr\"om method, Gauss cubature formula, anti-Gauss cubature rule, averaged schemes.
\end{keywords}

% REQUIRED
\begin{AMS}
65R20, 65D30, 42C05
\end{AMS}

\section{Introduction}

Let us consider the integral
\begin{equation*}
\cI(f)=\int_\cS f_1(\xx) \, d\xx,
\end{equation*} 
where $\cS:=[-1,1] \times [-1,1]$, $\xx=(x_1,x_2)$, and 
$f_1$ is an integrable bivariate function which may have algebraic
singularities on the boundary of $\cS$. We deal with such
singularities by writing
\begin{equation}\label{int}
\cI(f)=\int_\cS f(\xx) w(\xx) \, d\xx
= \int_{-1}^1 \int_{-1}^1 f(x_1,x_2) w_1(x_1) w_2(x_2) \, dx_1 dx_2,
\end{equation} 
that is, by factoring $f_1$ as the product of a function $f$ which is
sufficiently smooth on $\cS$ and a weight function 
\begin{equation}\label{w}
w(\xx) =w_1(x_1) w_2(x_2),
\end{equation}
with $w_i(x_i)=(1-x_i)^{\alpha_i} (1+x_i)^{\beta_i}$ for $\alpha_i,\beta_i>-1$
and $i=1,2$. Basically, we deal with integrand functions having endpoint
singularities which can be explicitly extracted and confined into a weight
function. This approach allows for constructing specific orthogonal
polynomials and, then, Gauss quadrature rules that can be computed efficiently,
avoiding techniques which use smoothing transformations \cite{Monegato1999} or
meshes adapted to singularities \cite{Kaneko1994}.
%\begin{equation*}
% w_i(x_i)=(1-x_i)^{\alpha_i} (1+x_i)^{\beta_i},
% \quad \alpha_i,\beta_i>-1, \quad i=1,2.
%\end{equation*} 

For the numerical approximation of the integral \eqref{int}, we may opt for two
alternative techniques; see \cite{Cools,Stroud}. 
The first one, known as the ``indirect'' approach,  is based on the approximation of
each one-dimensional integral in \eqref{int} by a well-known quadrature rule.
This procedure takes advantage of the deep study and exploration on univariate rules, compared with the multivariate ones.
In \cite{OccorsioRusso2011}, the authors  propose to approximate integrals of type \eqref{int} by a cubature formula obtained as a tensor product of two
Gaussian rules; see also formula \eqref{Gn} in Section~\ref{sec:cubature}.
They study the formula in suitable weighted spaces, prove its convergence and
stability, and  provide a lower bound for the order of convergence. Such bounds depend on the
smoothness properties of the integrand function $f$ and involve a constant
independent of $f$ and the number of nodes.

The second approach, which can be considered ``direct'', consists of
constructing true bivariate cubature schemes from scratch.
This case is more involved. Indeed, it is well known that Gaussian cubature
rules based on bivariate orthogonal polynomials exist only in few cases; see
for instance \cite{SchmidXu,Xu2012}. 
In \cite{Mysovskikh}, an example is given where the collocation nodes are
obtained as zeros of particular bivariate orthogonal polynomials; see
also~\cite{dunkl_xu_2014,Morrow,Schmid1978,Xu2015}.

In the initial part of the present paper, we point our attention to the
``indirect'' approach and develop an anti-Gaussian cubature rule as a
tensor product of two anti-Gaussian univariate formulae. 
Anti-Gauss rules were introduced for the first time in \cite{Laurie1}, where
Dirk Laurie estimated the error incurring in Gaussian integration
by halving the difference between the values of an $n$-point Gauss rule and
a new $(n+1)$-point formula. The newly developed quadrature rule, when applied
to polynomials of particular degree, gives an error equal in magnitude to that
of the $n$-point Gauss rule, but opposite in sign. For this property of the
error, the formula was named anti-Gaussian rule.
After Laurie, many other authors investigated such rules and proposed new
generalizations; see, for example,
\cite{Notaris2018,Notaris2022,ReichelSpalevic2021,Reichel2022}. However,
according to our knowledge, they have been investigated in the bivariate case
only on the real semi-axis \cite{Djukic2}. Here, we present for the first time
anti-Gaussian cubature formulae on bounded domains (the square for
simplicity) whose  utility is twofold.
On the one hand, they allow one to build new cubature rules, namely, averaged
or stratified cubature formulae, which are characterized by a higher accuracy
and smaller computational cost.  On the other hand, they provide numerical
estimates for the error of the Gaussian cubature rule for a fixed number of
points.
This leads to determining the number of points required to reach a
prescribed accuracy in the integral approximation.
The estimates so obtained are independent of unknown constants and are not
asymptotic.

In the second part of the paper, we apply anti-Gauss rules to the
numerical solution of the integral equation 
\begin{equation}\label{Fredholm}
(I-K)f=g,
\end{equation}
where $f$ is the bivariate function to be recovered, defined on the square
$\cS$, $I$ is the identity operator, and $g$ is a given right-hand
side.
The integral operator $K$ is defined by
\begin{equation*}
(Kf)(\yy)=\int_{\cS}  k(\xx,\yy) f(\xx) w(\xx) d\xx,
\end{equation*}
where $\xx=(x_1,x_2)$ and $\yy=(y_1,y_2)$ belong to $\cS$,
the kernel function $k$ defined on $\cS \times \cS$ is known,
$d\xx=dx_1dx_2$,
and $w$ is the weight function given in \eqref{w}. Defining the function $w$ as
the product of two classical Jacobi weights aims at accounting for possible
algebraic singularities 
with respect to the integration variable $\bm{x}$, 
at the boundary of the domain, 
of the solution and the kernel.
Let us assume that, in addition to this, the right-hand side $g$ and the
kernel $k$, with respect to the external variable $\bm{y}$,
have a low smoothness at the boundary of the square, i.e.,
their derivatives are singular at some boundary points. Since the solution
inherits such a regularity, to take care of this behaviour we consider the
equation in suitable weighted spaces, by introducing an additional weight
function $u(\bm{y})$.

Equation \eqref{Fredholm} arises in several problems related to electromagnetic
scattering, aerodynamics, computer graphics and mathematical physics.
Examples are the radiosity equation~\cite{Atkinson2000} and the rendering
equation~\cite{Kajiya1986}. 
In view of such applications, different numerical approaches have been
developed for the solution of
equation \eqref{Fredholm}, such as weighted Nystr\"om type methods
\cite{Laguardia2022,OccorsioRusso2011,OccorsioRusso2016}, integral mean value
methods \cite{Ma2015}, Galerkin methods \cite{Han2002,Jebreen2020}, collocation
methods \cite{Alipanah2011,Hat-Var2011,Mirzaee2015}, and wavelets methods
\cite{Wang2005}.

Recently, much attention has been devoted, in the one dimensional case, to
numerical techniques that exploit the advantages of anti-Gaussian type
formulae; see, for instance,~\cite{DFR2020} or~\cite{FRRS2022}.
In light of the numerical accuracy that such formulae are able to
reach, in this paper we introduce a weighted Nystr\"om method based on
anti-Gauss cubature formulae to solve equation \eqref{Fredholm} in suitable
weighted spaces, and investigate its stability and convergence.
We underline that blending Gauss-type quadrature formulae with
weighted spaces allow us to treat possible singularities or low smoothness at
the boundary for both the solution and the right-hand side, obtaining a
theoretical error of the order of the best polynomial approximation. 
We further propose to combine the above method with the Nystr\"om
method based on the Gauss rule, presented in \cite{OccorsioRusso2011}.   
This combination allows us to construct two Nystr\"om interpolants that, under
suitable assumptions, bracket the solution of the integral equation.  As a
consequence, an average of the two numerical solution produces a better
accuracy.

The numerical solution of the resulting linear system is also investigated.
The system
is characterized by a dense coefficients matrix and by a dimension which
becomes large when the functions involved have a low degree of smoothness. The
iterative solution by the GMRES method is investigated and the special
case of a separable kernel is also considered.  

Summarizing, three are the main novelty of the paper. First, we
construct an anti-Gauss cubature rule for approximating integrals defined on
the square whose integrand function may have algebraic
singularities at the boundary. Second, we develop a global approximation method
of the Nystr\"om type based on such formula and, by combining it with a
Gauss-based Nystr\"om method, provide an averaged Nystr\"om interpolant and an
error estimate.
Third, we explore various approaches for the numerical solution of the
resulting dense linear system and compare them.
According to our knowledge, this is the first paper in which such a
solution is analyzed.

The paper is organized as follows. In Section \ref{sec:cubature}, we introduce
the anti-Gauss cubature rule and investigate its properties with
Proposition~\ref{prop:anti_prop}. Under suitable assumptions, we extend the
bracketing property to a general function $f$ (Theorem~\ref{theorem2}) and
provide simpler assumptions in the Chebychev case; see
Corollaries~\ref{corollary1} and~\ref{corollary2}.
Section \ref{sec:nystrom} describes a Nystr\"om method based
on the Gauss and anti-Gauss rules, and show that the two corresponding Nystr\"om
interpolants bracket the solution of the integral equation, suggesting that a
better accuracy can be obtained by taking the average of the two interpolants.
In Section \ref{sec:system}, we analyze the linear systems that yield the
interpolants and solve them by optimized versions of the GMRES
iterative method.
In particular, we investigate the special case of a separable kernel.
Finally, Section \ref{sec:tests} presents the results of a numerical
experimentation on the new cubature rule and Nystr\"om method, supporting the
theoretical analysis, while Section \ref{sect:Concl} contains some conclusions and perspectives for future work.

\section{Cubature rules} \label{sec:cubature}

Let us consider the integral \eqref{int},
with the weight function $w$ defined in \eqref{w}.
To obtain a numerical approximation, we apply to each nested weighted integral
the optimal Gauss-Jacobi rule
\begin{equation}\label{gauss1d}
G^{(\ell)}_{n}(g) = \sum_{j=1}^{n} \lambda_{j}^{(\ell)}
g(x_{j}^{(\ell)}),
\end{equation}
where $g(x)$ is a univariate function defined on $[-1,1]$, $\lambda_j^{(\ell)}$
is the $j$th Christoffel number with respect to the weight $w_\ell(x)$
appearing in the integral, and $x_{j}^{(\ell)}$ is the $j$th zero of the
monic polynomial $p_{n}^{(\ell)}(x)$ orthogonal with respect to the same
weight, for $\ell=1,2$.

To ease exposition, we recall that $p_{n}^{(\ell)}(x)$ satisfies the
well-known three-term recurrence relation
$$
\begin{cases}
p^{(\ell)}_{-1}(x)=0, \quad p^{(\ell)}_0(x)=1, \\
p^{(\ell)}_{j+1}(x)=(x-a^{(\ell)}_j) p^{(\ell)}_j(x)-b^{(\ell)}_j p^{(\ell)}_{j-1}(x), \quad j=0,1,2,\ldots,
\end{cases}
$$
where the coefficients $a^{(\ell)}_j$ and $b^{(\ell)}_j$ are given by
\begin{equation}\label{ortcoef}
\begin{aligned}
a^{(\ell)}_j&= \frac{\beta_\ell^2-\alpha_\ell^2}{(2j+\alpha_\ell+\beta_\ell)(2j+\alpha_\ell+\beta_\ell+2)},
&j \geq 0, \\
b^{(\ell)}_0 &= \frac{2^{\alpha_\ell+\beta_\ell+1} \Gamma(\alpha_\ell+1) \Gamma(\beta_\ell+1)}{\Gamma(\alpha_\ell+\beta_\ell+2)}, \\
b^{(\ell)}_j&= \frac{4j(j+\alpha_\ell)(j+\beta_\ell)(j+\alpha_\ell+\beta_\ell)}{(2j+\alpha_\ell+\beta_\ell)^2
((2j+\alpha_\ell+\beta_\ell)^2-1)}, &j \geq 1.
\end{aligned}
\end{equation}

It is well known \cite{Gautschi04} that the zeros of $p_{n}^{(\ell)}(x)$ can be
efficiently computed as the eigenvalues of the Jacobi matrix associated to the
polynomials, while the Christoffel numbers are the squared first components of
the normalized eigenvectors of the same matrix.

Let us go back to the approximation of \eqref{int}.
By using $n_1$ points in the integral with the
differential $dx_1$ and $n_2$ nodes in that with $dx_2$, we obtain the
$(n_1 \times n_2)$-point Gauss cubature rule
\begin{equation}\label{Gn}
\cG_{n_1,n_2}(f)=\sum_{j_1=1}^{n_1}
\sum_{j_2=1}^{n_2}\lambda_{j_1}^{(1)} \lambda_{j_2}^{(2)}
f(x_{j_1}^{(1)},x_{j_2}^{(2)}).
\end{equation}
Denoting by $R^{(G)}_{n_1,n_2}(f)$ the remainder term for the integral, i.e., 
\begin{equation}\label{erreg}
\cI(f)=\cG_{n_1,n_2}(f)+R^{(G)}_{n_1,n_2}(f),
\end{equation}
it is immediately to observe that the interpolatory scheme \eqref{Gn} is such
that $$R^{(G)}_{n_1,n_2}(p)=0, \qquad  \forall p \in \mathbb{P}_{2n_1-1,2n_2-1},$$
where $\mathbb{P}_{k,\ell}$ is the set of all bivariate polynomials of the type
$$
p(x,y)=\sum_{i=0}^k \sum_{j=0}^\ell a_{ij} x^i y^j, 
\qquad a_{ij} \in \mathbb{R},
$$
whose degree is at most $k$ in the variable $x$ and at most $\ell$ in $y$.

In \cite[Proposition 2.2]{OccorsioRusso2011}, estimates for the error
$R^{(G)}_{n_1,n_2}(f)$ are given in terms of the smoothness properties of the
function $f$. Basically, the cubature error goes to zero as the error of best
polynomial approximation for $f$. Here, we want to provide an estimate for such
error by using stratified schemes. This approach is well consolidated in the
one-dimensional case through the well known Gauss-Kronrod formulae
\cite{Notaris2}, the anti-Gauss quadrature rules \cite{Laurie1}, and their
recent extensions \cite{Djukic1,Djukic2,ReichelSpalevic2021,Spalevic2007}.

To this end, we introduce the anti-Gaussian cubature scheme
\begin{equation}\label{A}
\cA_{n_1+1,n_2+1}(f)=\sum_{j_1=1}^{n_1+1} \sum_{j_2=1}^{n_2+1} \mu_{j_1}^{(1)} \mu_{j_2}^{(2)} f(\eta_{j_1}^{(1)},\eta_{j_2}^{(2)}),
\end{equation}
where $\mu_i^{(\ell)}$ is the $i$th anti-Gaussian quadrature weight for
$\ell=1,2$, and  $\eta_i^{(\ell)}$ is the $i$th zero of the polynomial $q^{(\ell)}_{n_\ell+1}(x)=p^{(\ell)}_{n_\ell+1}(x)-b^{(\ell)}_{n_\ell} p^{(\ell)}_{n_\ell-1}(x)$, with $\ell=1,2.$
%\begin{align*}
%q^{(\ell)}_{n_\ell+1}(x)& =p^{(\ell)}_{n_\ell+1}(x)-b^{(\ell)}_{n_\ell} p^{(\ell)}_{n_\ell-1}(x), \qquad \ell=1,2.
%\end{align*}
Anti-Gaussian cubature formulae and related generalizations have been very
recently investigated in \cite{DFermoM2023} for the Laguerre weight.

Similarly to \eqref{gauss1d} and \eqref{Gn}, such a cubature rule is
constructed by a tensor product of two univariate anti-Gauss
rules~\cite{Laurie1}, which we denote by $A^{(\ell)}_{n_\ell+1}$, $\ell=1,2$.
Therefore, the zeros $\{\eta_i^{(\ell)}\}_{i=1}^{n_\ell+1}$
are the eigenvalues of the matrix
$$
{\Psi}^{(\ell)}_{n_\ell+1} =
\begin{bmatrix}
J^{(\ell)}_{n_\ell} & \sqrt{2 b^{(\ell)}_{n_\ell} }\mathbf{e}_{n_\ell} \\
\sqrt{2 b^{(\ell)}_{n_\ell}} \mathbf{e}^T_{n_\ell} & a^{(\ell)}_{n_\ell} \\
\end{bmatrix}, \text{ with } J^{(\ell)}_{n_\ell}{=}
\begin{bmatrix}
a^{(\ell)}_0 & \sqrt{b^{(\ell)}_1} \\
\sqrt{b^{(\ell)}_1} & a^{(\ell)}_1 & \ddots \\
% & \sqrt{b^{(\ell)}_2} & a^{(\ell)}_2 & \ddots \\
&  \ddots & \ddots & \sqrt{b^{(\ell)}_{n_\ell-1}} \\
&  & \sqrt{b^{(\ell)}_{n_\ell-1}} &  a^{(\ell)}_{n_\ell-1} \\
\end{bmatrix}
$$ 
%where
%$$ J^{(\ell)}_{n_\ell}{=}
%\begin{bmatrix}
%a^{(\ell)}_0 & \sqrt{b^{(\ell)}_1} \\
%\sqrt{b^{(\ell)}_1} & a^{(\ell)}_1 & \ddots \\
%% & \sqrt{b^{(\ell)}_2} & a^{(\ell)}_2 & \ddots \\
%&  \ddots & \ddots & \sqrt{b^{(\ell)}_{n_\ell-1}} \\
%&  & \sqrt{b^{(\ell)}_{n_\ell-1}} &  a^{(\ell)}_{n_\ell-1} \\
%\end{bmatrix}
%$$
and  $\mathbf{e}_{n_\ell}=(0,0,\dots,1)^T \in \mathbb{R}^{n_\ell}$. The coefficients $\{\mu_i^{(\ell)}\}_{i=1}^{n_\ell+1}$ are determined as $\mu^{(\ell)}_i=b^{(\ell)}_0 \, (v^{(\ell)}_{i,1})^2, $
where $b^{(\ell)}_0$ is defined in \eqref{ortcoef} and $v^{(\ell)}_{i,1}$ is
the first entry of the normalized eigenvector associated to the eigenvalue $\eta^{(\ell)}_i$.

We remark that for the computation of the eigenvalues and eigenvectors we can resort to the algorithm proposed by Golub and Welsch in \cite{GW}.
It is based on the QR factorization with a Wilkinson-like shift and has a
computational cost $c n_\ell^2+O(n_\ell)$, $\ell=1,2$, where $c$ is a small positive constant which does not depend on $n_\ell$.

Let us mention that, by definition, all the weights are positive and the zeros
interlace the nodes of the Gauss rule \cite{Laurie1}, i.e.,
$\eta_1^{(\ell)}<x_1^{(\ell)}<\eta_2^{(\ell)}<x_2^{(\ell)}<\cdots<x^{(\ell)}_{n_\ell}<\eta^{(\ell)}_{n_\ell+1}.$
Moreover, the anti-Gauss nodes $\eta^{(\ell)}_i$ belong to
the interval $[-1,1]$ when
\begin{equation}\label{condparam}
\begin{cases}
\alpha_\ell \geq -\frac{1}{2}, \\
\beta_\ell \geq -\frac{1}{2}, \\
(2 \alpha_\ell+1)(\alpha_\ell+\beta_\ell+2)+\frac{1}{2}(\alpha_\ell+1)(\alpha_\ell+\beta_\ell)(\alpha_\ell+\beta_\ell+1) \geq 0, \\
(2 \beta_\ell+1)(\alpha_\ell+\beta_\ell+2)+\frac{1}{2}(\beta_\ell+1)(\alpha_\ell+\beta_\ell)(\alpha_\ell+\beta_\ell+1) \geq 0.
\end{cases}
\end{equation}

We remark that conditions \eqref{condparam} are satisfied by some classical
Jacobi weights, in particular by the Legendre weight
($\alpha_\ell=\beta_\ell=0$) and the Chebychev weights of the first
($\alpha_\ell=\beta_\ell=-1/2$), second ($\alpha_\ell=\beta_\ell=1/2$), third
($\alpha_\ell=-1/2$, $\beta_\ell=1/2$), and fourth kind ($\alpha_\ell=1/2$,
$\beta_\ell=-1/2$). However, the corresponding nodes might include the endpoints of the integration interval.
This is true, for example, for the Chebychev weight of the first (${\eta}_1=-1$
and $\eta_{n_\ell+1}=1$), third ($\eta_{n_\ell+1}=1$), and fourth kind
($\eta_1=-1$).
In the case of Chebychev polynomials of the first kind an explicit form for the
nodes and weights have been given in \cite[Theorem 2]{DFR2020}.
From now on, we assume that conditions \eqref{condparam} are satisfied.

Denoting by $R^{(A)}_{n_1+1,n_2+1}(f)$ the related cubature error, i.e.,
\begin{equation}\label{errea}
\cI(f)=\cA_{n_1+1,n_2+1}(f)+R^{(A)}_{n_1+1,n_2+1}(f),
\end{equation}
we have the following proposition, which has been proved
in~\cite[Proposition~1]{DFermoM2023} for the Laguerre weight on $[0,\infty)$.
\begin{proposition}\label{prop:anti_prop}
The error of the anti-Gauss cubature scheme \eqref{A} has the following property
\begin{equation}\label{errorA}
R^{(A)}_{n_1+1,n_2+1}(p)=-R^{(G)}_{n_1,n_2}(p), \quad \forall p \in \mathbb{P}_{2n_1+1,2n_2-1} \cup \mathbb{P}_{2n_1-1,2n_2+1}.
\end{equation}
\end{proposition}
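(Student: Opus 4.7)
The plan is to reduce the bivariate identity to the known univariate anti-Gauss flipping property of Laurie through the tensor-product structure of all three functionals $\cI$, $\cG_{n_1,n_2}$, and $\cA_{n_1+1,n_2+1}$, handling the two ``slabs'' in the union separately.

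First I would recall, for each factor $\ell=1,2$, the univariate Laurie relation for the anti-Gauss rule $A^{(\ell)}_{n_\ell+1}$ relative to $G^{(\ell)}_{n_\ell}$: denoting their remainders by $r^{(A,\ell)}_{n_\ell+1}$ and $r^{(G,\ell)}_{n_\ell}$, one has $r^{(A,\ell)}_{n_\ell+1}(q)=-r^{(G,\ell)}_{n_\ell}(q)$ for every $q\in\mathbb{P}_{2n_\ell+1}$. In particular, since $G^{(\ell)}_{n_\ell}$ is exact on $\mathbb{P}_{2n_\ell-1}$, the flipping identity forces $A^{(\ell)}_{n_\ell+1}$ to be exact on $\mathbb{P}_{2n_\ell-1}$ as well.

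Next I would exploit linearity and pass to monomials. Because $R^{(G)}_{n_1,n_2}$ and $R^{(A)}_{n_1+1,n_2+1}$ are linear in the integrand, it suffices to verify \eqref{errorA} for the monomial basis of each of the two spaces $\mathbb{P}_{2n_1+1,2n_2-1}$ and $\mathbb{P}_{2n_1-1,2n_2+1}$. Consider first a monomial $p(x_1,x_2)=x_1^{i}x_2^{j}$ with $i\le 2n_1+1$ and $j\le 2n_2-1$. The tensor-product structure of the weight and of both cubature rules gives
\begin{equation*}
\cI(p)=I_1(x_1^i)\,I_2(x_2^j),\quad \cG_{n_1,n_2}(p)=G^{(1)}_{n_1}(x_1^i)\,G^{(2)}_{n_2}(x_2^j),\quad \cA_{n_1+1,n_2+1}(p)=A^{(1)}_{n_1+1}(x_1^i)\,A^{(2)}_{n_2+1}(x_2^j),
\end{equation*}
where $I_\ell$ denotes the one-dimensional weighted integral. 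Since $j\le 2n_2-1$, both $G^{(2)}_{n_2}(x_2^j)=I_2(x_2^j)$ and, by the observation above, $A^{(2)}_{n_2+1}(x_2^j)=I_2(x_2^j)$. Hence the remainders collapse to univariate ones multiplied by the common factor $I_2(x_2^j)$:
\begin{equation*}
R^{(G)}_{n_1,n_2}(p)=r^{(G,1)}_{n_1}(x_1^i)\,I_2(x_2^j),\qquad R^{(A)}_{n_1+1,n_2+1}(p)=r^{(A,1)}_{n_1+1}(x_1^i)\,I_2(x_2^j),
\end{equation*}
and the Laurie relation $r^{(A,1)}_{n_1+1}(x_1^i)=-r^{(G,1)}_{n_1}(x_1^i)$ (valid since $i\le 2n_1+1$) yields the desired identity. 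The case $i\le 2n_1-1$, $j\le 2n_2+1$ is treated symmetrically, exchanging the roles of the two variables, and linear combinations extend the equality to arbitrary polynomials in either of the two spaces.

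The argument is essentially a bookkeeping exercise once the tensor factorization is in place, so I do not anticipate a serious obstacle. The only subtle point worth emphasising is that the statement is not asserted on $\mathbb{P}_{2n_1+1,2n_2+1}$ (which would not hold): the union structure reflects exactly the fact that in each of the two cases one of the two tensor factors must lie in the exactness range $\mathbb{P}_{2n_\ell-1}$ of both rules, so that the flipping property from the other factor can be transferred to the bivariate remainder without a cross-term spoiling the cancellation.
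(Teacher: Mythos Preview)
Your proof is correct and is essentially the approach the paper has in mind: the paper simply cites \cite[Proposition~1]{DFermoM2023}, whose proof is precisely this tensor-product reduction to Laurie's univariate flipping identity. Your explicit remark on why the result cannot extend to $\mathbb{P}_{2n_1+1,2n_2+1}$ is a useful addition.
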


\begin{proof}
The proof follows the same line as that of~\cite[Proposition~1]{DFermoM2023}.
\end{proof}
\smallskip

It is worth noting that, as anticipated in the Introduction, the change
of sign in the cubature error stated in \eqref{errorA} motivated Dirk Laurie
to refer to \emph{Anti-Gaussian quadrature formulas} in \cite{Laurie1}.

Hence, by virtue of \eqref{errorA}, we can immediately deduce some important
features of the rule $\cA_{n_1+1,n_2+1}$:
\begin{enumerate}

\item If $p \in \mathbb{P}_{2n_1-1,2n_2-1}$, then $R^A_{n_1+1,n_2+1}(p)=0$.

\item If $p \in \mathbb{P}_{2n_1+1,2n_2-1} \cup \mathbb{P}_{2n_1-1,2n_2+1}$,
the Gauss and the anti-Gauss cubature rules provide an interval containing the
exact integral $\cI(p)$. Indeed, it either holds
\begin{equation*}
\cA_{n_1+1,n_2+1}(p) \leq \cI(p) \leq \cG_{n_1,n_2}(p)
\quad \text{or}  \quad
\cG_{n_1,n_2}(p) \leq \cI(p) \leq \cA_{n_1+1,n_2+1}(p).
\end{equation*}

\item For every polynomial $p\in\mathbb{P}_{2n_1+1,2n_2-1} \cup
\mathbb{P}_{2n_1-1,2n_2+1}$, it holds
$$
\cI(p)=
\frac{1}{2}\left[\cG_{n_1,n_2}(p)+\cA_{n_1+1,n_2+1}(p)\right].
$$
This means that the convex combination of the two cubature formulae at the
right-hand side is a cubature formula more accurate than the Gauss rule.
From now on, we will denote it by
$$
\cG^{Avg}_{2n_1+1,2n_2+1}(f)=
\frac{1}{2}\left[\cG_{n_1,n_2}(f)+\cA_{n_1+1,n_2+1}(f)\right],
$$
and we will call it \emph{averaged Gauss cubature formula}. It has positive
weights and  involves $(2n_1+1)\times(2n_2+1)$ real and distinct nodes.

\item By using the scheme $\cG^{Avg}_{2n_1+1,2n_2+1}$, we can estimate the error $R^{(G)}_{n_1,n_2}$ as
\begin{align}\label{erre1}
R^{(G)}_{n_1,n_2}=\cI(f)-\cG_{n_1,n_2}(f) & \simeq
\cG^{Avg}_{2n_1+1,2n_2+1}(f)-\cG_{n_1,n_2}(f) \nonumber \\
&= \frac{1}{2}\left[\cA_{n_1+1,n_2+1}(f)-\cG_{n_1,n_2}(f)\right]
=:R^{[1]}_{n_1,n_2}(f).
\end{align}
\end{enumerate}

The computational complexity required for the computation of nodes and weights
of $\cG^{Avg}_{2n_1+1,2n_2+1}$ is $2cn_\ell^2+2O(n_\ell)$, which halves
the cost involved for the Gauss rule $\cG_{2n_1,2n_2}$, that is,
$4cn_\ell^2+2O(n_\ell)$.

We recall that the anti-Gauss cubature rule \eqref{A} is a stable formula. This
means that if we look at the rule as a linear functional
$\cA_{n_1+1,n_2+1}: \cX \to \mathbb{R}$ where $\cX$ is
a Banach space, then
$$ \sup_{n_1,n_2} \|\cA_{n_1+1,n_2+1}\| < \infty.$$
This is a consequence of the stability of the univariate anti-Gauss quadrature 
rule, which has also been proved in weighted spaces equipped with the uniform
norm in \cite{DFR2020}, under suitable assumptions; see also \cite{FRRS2022},
where such assumptions are relaxed.

In the univariate case it has been proved, under rather restrictive
assumptions on the integrand function $f$, 
that the Gauss and the anti-Gauss quadrature rules
bracket the integral $I(f)$; see~\cite[Equations (26)-(28)]{crs1999},
\cite[p.~1664]{fmrr13b}, and \cite[Theorem 3.1]{PranicReichel}.
The same result has been proved under much less limiting assumptions
in~\cite[Corollary~1]{DFR2020}, for the solution of second-kind integral
equations.

In the following, we extend the bracketing condition to bivariate integrals,
that is, we give assumptions for which property 2) is valid for a general
function $f$ of two variables.

Let us expand the integrand function $f(\xx)$ in terms of the polynomials
$
p_{n_1,n_2}(\xx)=p^{(1)}_{n_1}(x_1)p^{(2)}_{n_2}(x_2),
$
orthogonal with respect to the weight function $w(\xx)$, in the form
\begin{equation}\label{expansion2d}
f(\xx) = \sum_{i=0}^\infty \sum_{j=0}^\infty \alpha_{i,j} p_{i,j}(\xx),
\end{equation}
where
$$
\alpha_{i,j} = \left(b_0^{(1)} b_0^{(2)}\right)^{-\frac{1}{2}}
\int_{\cS} f(\xx) p_{i,j}(\xx) w(\xx) \, d\xx.
$$

\begin{theorem}\label{theorem2}
Let us assume that the coefficients $\alpha_{i,j}$ in \eqref{expansion2d}
converge to zero sufficiently rapidly, and the following relation holds true
$$
(\cI-\cG_{n_1,n_2})(f) = -S_{n_1,n_2} + \cE^{(1)}_{n_1,n_2}, \quad
(\cI-\cA_{n_1+1,n_2+1})(f) = S_{n_1,n_2} + \cE^{(2)}_{n_1,n_2}
$$
%\begin{equation*}
%\begin{aligned}
%(\cI-\cG_{n_1,n_2})(f) &= -S_{n_1,n_2} + \cE^{(1)}_{n_1,n_2}, \\
%(\cI-\cA_{n_1+1,n_2+1})(f) &= S_{n_1,n_2} + \cE^{(2)}_{n_1,n_2},
%\end{aligned}
%\end{equation*}
with
\begin{equation}\label{condsign}
\max(|\cE^{(1)}_{n_1,n_2}|,|\cE^{(2)}_{n_1,n_2}|)<|S_{n_1,n_2}|,
\end{equation}
where
$$
S_{n_1,n_2} = \sqrt{b_0^{(2)}} \sum_{i=2n_1}^{2n_1+1} \alpha_{i,0}
	G^{(1)}_{n_1}(p^{(1)}_i)
+ \sqrt{b_0^{(1)}} \sum_{j=2n_2}^{2n_2+1} \alpha_{0,j}
	G^{(2)}_{n_2}(p^{(2)}_j),
$$
with $G^{(\ell)}_{n_\ell}$ defined by \eqref{gauss1d}.
The terms $\cE^{(1)}_{n_1,n_2}$ and $\cE^{(2)}_{n_1,n_2}$ depend on both $f$
and the quadrature formulae involved; their expression will be given in the
proof.

Then, either
\begin{equation*}
\cG_{n_1,n_2}(f) \leq \cI(f) \leq \cA_{n_1+1,n_2+1}(f) \quad \text{or} \quad
\cA_{n_1+1,n_2+1}(f) \leq \cI(f) \leq \cG_{n_1,n_2}(f).
\end{equation*}
\end{theorem}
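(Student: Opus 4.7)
The plan is to substitute the orthogonal expansion \eqref{expansion2d} into both error expressions and isolate those terms on which Proposition~\ref{prop:anti_prop} forces the Gauss and anti-Gauss errors to be equal in magnitude and opposite in sign. Once such a decomposition is obtained, the bracketing claim reduces to a pure sign argument using \eqref{condsign}.

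First I would write
\begin{equation*}
(\cI-\cG_{n_1,n_2})(f)=\sum_{i,j\geq 0}\alpha_{i,j}\,(\cI-\cG_{n_1,n_2})(p_{i,j})
\end{equation*}
and the analogous expression for $\cA_{n_1+1,n_2+1}$. All terms with $i\leq 2n_1-1$ and $j\leq 2n_2-1$ lie in $\mathbb{P}_{2n_1-1,2n_2-1}$, hence are integrated exactly by both rules and drop out. The surviving contributions split naturally into two families: the \emph{axis terms} $p_{i,0}$ with $i\geq 2n_1$ and $p_{0,j}$ with $j\geq 2n_2$, and the \emph{mixed terms} with $i,j\geq 1$ and at least one index exceeding its exactness threshold.

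Second, the axis terms factor as products. Using $p^{(\ell)}_0=(b_0^{(\ell)})^{-1/2}$ for the orthonormal basis and the fact that both rules integrate constants exactly to $b_0^{(\ell)}$ in the unused variable, a direct computation gives $(\cI-\cG_{n_1,n_2})(p_{i,0})=-\sqrt{b_0^{(2)}}\,G^{(1)}_{n_1}(p^{(1)}_i)$, and similarly for the $(0,j)$ terms. For $i\in\{2n_1,2n_1+1\}$ (resp.\ $j\in\{2n_2,2n_2+1\}$) these polynomials belong to $\mathbb{P}_{2n_1+1,2n_2-1}\cup\mathbb{P}_{2n_1-1,2n_2+1}$, so by Proposition~\ref{prop:anti_prop} the anti-Gauss error on each of them is exactly the negative of the Gauss error. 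Aggregating these four axis contributions yields precisely $-S_{n_1,n_2}$ on the Gauss side and $+S_{n_1,n_2}$ on the anti-Gauss side.

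Third, all remaining contributions, namely axis terms of degree $\geq 2n_\ell+2$ and all genuinely mixed terms, are collected into $\cE^{(1)}_{n_1,n_2}$ and $\cE^{(2)}_{n_1,n_2}$ respectively; this defines the quantities whose existence the statement postulates. The hypothesis that the $\alpha_{i,j}$ decay sufficiently rapidly, combined with the uniform stability of both quadrature formulae (recalled after Proposition~\ref{prop:anti_prop}) and the boundedness of $G^{(\ell)}_{n_\ell}(p^{(\ell)}_k)$ in $k$, is what ultimately justifies \eqref{condsign} in applications; pinning down a quantitative version of "sufficiently rapidly" in terms of the smoothness of $f$ is the main technical obstacle, entirely analogous to the univariate analyses in \cite{DFR2020,PranicReichel}. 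Once \eqref{condsign} is in force the conclusion is immediate: $(\cI-\cG_{n_1,n_2})(f)$ has the sign of $-S_{n_1,n_2}$ while $(\cI-\cA_{n_1+1,n_2+1})(f)$ has the sign of $+S_{n_1,n_2}$, so the two errors have opposite signs and $\cI(f)$ is forced to lie between $\cG_{n_1,n_2}(f)$ and $\cA_{n_1+1,n_2+1}(f)$.
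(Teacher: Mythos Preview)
Your proposal is correct and follows essentially the same route as the paper: expand $f$ in the tensor orthogonal basis, observe that only terms with $i\geq 2n_1$ or $j\geq 2n_2$ survive in both error expressions, isolate the four ``axis'' terms $(i,0)$, $(0,j)$ with $i\in\{2n_1,2n_1+1\}$, $j\in\{2n_2,2n_2+1\}$ as $\mp S_{n_1,n_2}$, dump everything else into $\cE^{(1)}_{n_1,n_2}$, $\cE^{(2)}_{n_1,n_2}$, and conclude by the sign argument from \eqref{condsign}. The only cosmetic difference is that you invoke the bivariate Proposition~\ref{prop:anti_prop} to flip the sign on the axis terms, whereas the paper appeals directly to the univariate identity $A^{(\ell)}_{n_\ell+1}(p)=-G^{(\ell)}_{n_\ell}(p)$ for $1\leq\deg p\leq 2n_\ell+1$ and writes out $\cE^{(1)}_{n_1,n_2}$, $\cE^{(2)}_{n_1,n_2}$ explicitly as four-block sums; since the statement promises those expressions ``will be given in the proof,'' you should spell them out rather than merely describing their content.
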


\begin{proof}
From \eqref{expansion2d},
$
\cI(f) = \alpha_{0,0} \left(b_0^{(1)} b_0^{(2)}\right)^{\frac{1}{2}}.
$
Substituting \eqref{expansion2d} in \eqref{Gn} yields
$$
\cG_{n_1,n_2}(f) = \sum_{i=0}^{\infty}
\sum_{j=0}^{\infty} \alpha_{i,j} G^{(1)}_i G^{(2)}_j,
$$
where $G^{(\ell)}_i=G^{(\ell)}_{n_\ell}(p^{(\ell)}_i)$, $\ell=1,2$.
Then, exploiting the degree of exactness of $G^{(\ell)}_i$ we obtain
$
(\cI-\cG_{n_1,n_2})(f) = -S_{n_1,n_2} + \cE^{(1)}_{n_1,n_2},
$
with
$$
\begin{aligned}
\cE^{(1)}_{n_1,n_2} =
&-\sum_{i=2n_1}^{2n_1+1} \sum_{j=2n_2}^{2n_2+1} \alpha_{i,j}
	G^{(1)}_i G^{(2)}_j
-\sum_{i=2n_1+2}^{\infty} \left[ \alpha_{i,0} \sqrt{b_0^{(2)}}
	+\sum_{j=2n_2}^{2n_2+1} \alpha_{i,j} G^{(2)}_j \right] G^{(1)}_i \\
&-\sum_{i=2n_1+2}^{\infty} \sum_{j=2n_2+2}^{\infty} \alpha_{i,j}
	G^{(1)}_i G^{(2)}_j
-\sum_{j=2n_2+2}^{\infty} \left[ \alpha_{0,j} \sqrt{b_0^{(1)}}
	+\sum_{i=2n_1}^{2n_1+1} \alpha_{i,j} G^{(1)}_i \right] G^{(2)}_j.
\end{aligned}
$$

Now, substituting \eqref{expansion2d} in \eqref{A} leads to
$$
\cA_{n_1+1,n_2+1}(f) = \sum_{i=0}^{\infty}
\sum_{j=0}^{\infty} \alpha_{i,j} A^{(1)}_i A^{(2)}_j,
$$
where $A^{(\ell)}_i=A^{(\ell)}_{n_\ell+1}(p^{(\ell)}_i)$, $\ell=1,2$.
The definition of the anti-Gauss rule implies that
$
A^{(\ell)}_{n_\ell+1}(p) = 2I(p) - G^{(\ell)}_{n_\ell}(p)
= -G^{(\ell)}_{n_\ell}(p),
$
for any polynomial $p$ of degree larger than zero and smaller or equal to
$2n_\ell+1$.
By applying this property and a similar argument as before, we have
$
(\cI-\cA_{n_1+1,n_2+1})(f) = S_{n_1,n_2} + \cE^{(2)}_{n_1,n_2},
$
with
$$
\begin{aligned}
\cE^{(2)}_{n_1,n_2} =
&-\sum_{i=2n_1}^{2n_1+1} \sum_{j=2n_2}^{2n_2+1} \alpha_{i,j}
	G^{(1)}_i G^{(2)}_j
-\sum_{i=2n_1+2}^{\infty} \left[ \alpha_{i,0} \sqrt{b_0^{(2)}}
	-\sum_{j=2n_2}^{2n_2+1} \alpha_{i,j} G^{(2)}_j \right] A^{(1)}_i \\
&-\sum_{i=2n_1+2}^{\infty} \sum_{j=2n_2+2}^{\infty} \alpha_{i,j}
	A^{(1)}_i A^{(2)}_j
-\sum_{j=2n_2+2}^{\infty} \left[ \alpha_{0,j} \sqrt{b_0^{(1)}}
	-\sum_{i=2n_1}^{2n_1+1} \alpha_{i,j} G^{(1)}_i \right] A^{(2)}_j.
\end{aligned}
$$

The above relations show that when assumption \eqref{condsign} is satisfied,
there is a change of sign in the errors produced by both the Gauss rule
and anti-Gauss one. This proves the assertion.
\end{proof}

The assumption \eqref{condsign} is undoubtedly restrictive, but it is only a
sufficient condition for the bracketing of the solution.
In \cite[Corollary~1]{DFR2020} a less restrictive assumption has been given, in
the univariate case, for the Chebychev weight of the first kind.
The following corollary extends that result to bivariate integrals.

\begin{corollary}\label{corollary1}
Let $\alpha_i=\beta_i=-\frac{1}{2}$ in \eqref{w}. Then, if
\begin{equation*}
\max(|\tilde{\cE}^{(1)}_{n_1,n_2}|,|\tilde{\cE}^{(2)}_{n_1,n_2}|)< |\alpha_{2n_1,0}+\alpha_{0,2n_2} |,
\end{equation*}
holds true for $n_1$ and $n_2$ large enough, where
\begin{equation*}
\begin{aligned}
\tilde{\cE}^{(1)}_{n_1,n_2}& = \sqrt{2} \alpha_{2n_1,2n_2}+
\sum_{k_1=2}^{\infty} (-1)^{k_1} \left(\alpha_{2n_1 k_1,0}- \sqrt{2}\,
\alpha_{2n_1 k_1,2 n_2}  \right) \\
 \phantom{=} &+ \sqrt{2}
\sum_{k_1=2}^{\infty}\sum_{k_2=2}^{\infty}  (-1)^{k_1+k_2}  \alpha_{2 n_1 k_1,
2 n_2 k_2} + \sum_{k_2=2}^{\infty} (-1)^{k_2} \left(\alpha_{0, 2n_2 k_2}-
\sqrt{2}\, \alpha_{2n_1,2 n_2 k_2}  \right),
\end{aligned}
\end{equation*}
and
\begin{equation*}
\begin{aligned}
\tilde{\cE}^{(2)}_{n_1,n_2}& = \sqrt{2} \alpha_{2n_1,2n_2}+
\sum_{k_1=2}^{\infty}  \left(\alpha_{2n_1 k_1,0}+ \sqrt{2}\, \alpha_{2n_1 k_1,2
n_2}  \right) \\
\phantom{=} &+ \sqrt{2} \sum_{k_1=2}^{\infty}\sum_{k_2=2}^{\infty}
\alpha_{2 n_1 k_1, 2 n_2 k_2} + \sum_{k_2=2}^{\infty}  \left(\alpha_{0, 2n_2
k_2}+ \sqrt{2}\, \alpha_{2n_1,2 n_2 k_2}  \right),
\end{aligned}
\end{equation*}
then the statement of Theorem \ref{theorem2} holds true.
\end{corollary}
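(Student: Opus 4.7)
\noindent\textit{Proof proposal.} The plan is to specialize Theorem~\ref{theorem2} by computing the values $G^{(\ell)}_{n_\ell}(p^{(\ell)}_k)$ and $A^{(\ell)}_{n_\ell+1}(p^{(\ell)}_k)$ explicitly for the Chebychev weight of the first kind, substituting them into the expressions for $S_{n_1,n_2}$, $\cE^{(1)}_{n_1,n_2}$ and $\cE^{(2)}_{n_1,n_2}$ from the proof of Theorem~\ref{theorem2}, and then verifying that, up to a common positive factor, condition~\eqref{condsign} reduces to the hypothesis stated in the corollary. Theorem~\ref{theorem2} then immediately yields the bracketing.

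The key inputs are two aliasing formulas. For the Chebychev weight of the first kind the orthogonal polynomials $p^{(\ell)}_k$ are proportional to $T_k$, so a direct trigonometric summation over the Chebychev--Gauss nodes $\cos((2j-1)\pi/(2n_\ell))$ gives
$$
G^{(\ell)}_{n_\ell}(p^{(\ell)}_k)=
\begin{cases}
\sqrt{b_0^{(\ell)}}, & k=0,\\
(-1)^r\sqrt{2\,b_0^{(\ell)}}, & k=2rn_\ell,\ r\ge 1,\\
0, & \text{otherwise}.
\end{cases}
$$
For the anti-Gauss rule I would use~\cite[Theorem~2]{DFR2020}, or equivalently the factorization $q^{(\ell)}_{n_\ell+1}(x)\propto(x^2-1)U_{n_\ell-1}(x)$, which identifies the anti-Gauss nodes as $\eta^{(\ell)}_j=\cos((j-1)\pi/n_\ell)$ with weights $\pi/(2n_\ell)$ at the two endpoints and $\pi/n_\ell$ elsewhere. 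An analogous trigonometric calculation then yields
$$
A^{(\ell)}_{n_\ell+1}(p^{(\ell)}_k)=
\begin{cases}
\sqrt{b_0^{(\ell)}}, & k=0,\\
\sqrt{2\,b_0^{(\ell)}}, & k=2rn_\ell,\ r\ge 1,\\
0, & \text{otherwise},
\end{cases}
$$
differing from the Gauss pattern only by the absence of the sign $(-1)^r$, precisely the sign change that gives ``anti-Gauss'' rules their name.

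Substituting into $S_{n_1,n_2}$, the index ranges $\{2n_1,2n_1+1\}$ and $\{2n_2,2n_2+1\}$ each collapse to a single surviving term, producing
$$
S_{n_1,n_2}=-\sqrt{2\,b_0^{(1)}b_0^{(2)}}\,(\alpha_{2n_1,0}+\alpha_{0,2n_2}).
$$
Substituting into the four quadruple sums that define $\cE^{(1)}_{n_1,n_2}$ in the proof of Theorem~\ref{theorem2} kills every term except those for which $i$ and $j$ are nonnegative integer multiples of $2n_1$ and $2n_2$, respectively. After separating the boundary case $k_\ell=0$ (which does not pick up the extra $\sqrt{2}$) from the tail $k_\ell\ge 1$, and collecting the signs $(-1)^{k_\ell}$ inherited from the Gauss factors, one finds $\cE^{(1)}_{n_1,n_2}=-\sqrt{2\,b_0^{(1)}b_0^{(2)}}\,\tilde{\cE}^{(1)}_{n_1,n_2}$ with $\tilde{\cE}^{(1)}_{n_1,n_2}$ exactly as stated. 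Repeating the computation for $\cE^{(2)}_{n_1,n_2}$ with all Gauss signs replaced by $+1$ gives $\cE^{(2)}_{n_1,n_2}=-\sqrt{2\,b_0^{(1)}b_0^{(2)}}\,\tilde{\cE}^{(2)}_{n_1,n_2}$. Dividing~\eqref{condsign} by the common positive factor $\sqrt{2\,b_0^{(1)}b_0^{(2)}}$ then reduces it to the hypothesis of the corollary.

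The main obstacle I anticipate is the bookkeeping of the quadruple sums defining $\cE^{(1)}_{n_1,n_2}$: one must simultaneously track which indices survive, the alternating signs $(-1)^{k_1+k_2}$ from Gauss aliasing, the mixed terms such as $\alpha_{2n_1k_1,2n_2}$ in which exactly one index equals $2n_\ell$ and therefore contributes an extra $\sqrt{2}$, and the normalization jump between $p^{(\ell)}_0$ and $p^{(\ell)}_k$ for $k\ge 1$ that is the source of every $\sqrt{2}$ factor appearing in the statement. Once these sub-cases are correctly catalogued, the remaining manipulations are routine, and the corresponding computation for $\cE^{(2)}_{n_1,n_2}$ proceeds identically except for the replacement of Gauss signs by $+1$.
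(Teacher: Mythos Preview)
Your approach is essentially the same as the paper's: both substitute the Chebychev aliasing identity $G^{(\ell)}_{n_\ell}(p^{(\ell)}_{2n_\ell k})=(-1)^k\sqrt{2\pi}$ (which the paper cites from~\cite{DFR2020}) into the expressions for $S_{n_1,n_2}$, $\cE^{(1)}_{n_1,n_2}$, $\cE^{(2)}_{n_1,n_2}$ from the proof of Theorem~\ref{theorem2} and then cancel the common positive factor to reduce~\eqref{condsign} to the stated hypothesis. You are more explicit than the paper about the companion anti-Gauss aliasing identity, and your overall sign on $S_{n_1,n_2}$ differs from the one the paper records, but since the final condition involves only absolute values this is immaterial.
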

\begin{proof}
The identity
\begin{equation*}
G_n(p_i^{(\ell)})= \begin{cases}
(-1)^{k} \sqrt{2 \pi}, & \text{if $i=2nk$}, \\
0, & \text{otherwise},
\end{cases}
\end{equation*}
reported in proof of Corollary 1 in \cite{DFR2020}, allows us to
obtain a simplified expression for the terms $S_{n_1,n_2}$, $\cE^{(1)}_{n_1,n_2}$ and $\cE^{(2)}_{n_1,n_2}$ given in Theorem \ref{theorem2}, that is,
\begin{equation*}
S_{n_1,n_2} = \sqrt{2} \pi (\alpha_{2n_1,0}+\alpha_{0,2n_2}), \quad \cE^{(1)}_{n_1,n_2} = \sqrt{2} \pi \tilde{\cE}^{(1)}_{n_1,n_2}, \quad \cE^{(2)}_{n_1,n_2} = \sqrt{2} \pi \tilde{\cE}^{(2)}_{n_1,n_2}.
\end{equation*}
By applying Theorem \ref{theorem2}, we conclude the proof.
\end{proof}

We remark here that, for the Chebychev case, the number of coefficients
$\alpha_{i,j}$ present in the different series terms is much smaller than the
ones involved in the completed expression of $|\cE^{(1)}_{n_1,n_2}|$ and
$|\cE^{(2)}_{n_1,n_2}|$ introduced in Theorem~\ref{theorem2} proof,
simplifying the expression \eqref{condsign}.

In the next corollary, we further streamline the results in Corollary \ref{corollary1}.

\begin{corollary}\label{corollary2}
Let us consider $\alpha_i=\beta_i=-\frac{1}{2}$ in \eqref{w}. Then, if
$$
|\theta_{n_1,n_2}|< |\alpha_{2n_1,0}+\alpha_{0,2n_2} |,
$$
holds true for $n_1$ and $n_2$ large enough, where
\begin{align*}
|\theta_{n_1,n_2}|&= \sqrt{2}|\alpha_{2n_1,2n_2}|+\sum_{k_1=2}^{\infty}  |\alpha_{2n_1 k_1,0}|+ \sqrt{2}\, |\alpha_{2n_1 k_1,2 n_2}|\\ &+\sqrt{2} \sum_{k_1=2}^{\infty}\sum_{k_2=2}^{\infty} |\alpha_{2 n_1 k_1, 2 n_2 k_2}| + \sum_{k_2=2}^{\infty} |\alpha_{0, 2n_2 k_2}|+ \sqrt{2}\, |\alpha_{2n_1,2 n_2 k_2}|,
\end{align*}
then the statement of Theorem \ref{theorem2} holds true.
\end{corollary}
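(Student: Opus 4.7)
The plan is to reduce Corollary~\ref{corollary2} to Corollary~\ref{corollary1} by a direct triangle inequality argument, since $\theta_{n_1,n_2}$ is nothing more than the ``absolute-value majorant'' of the signed quantities $\tilde{\cE}^{(1)}_{n_1,n_2}$ and $\tilde{\cE}^{(2)}_{n_1,n_2}$ appearing in Corollary~\ref{corollary1}.

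First, I would examine term-by-term the explicit expressions
$$
\tilde{\cE}^{(1)}_{n_1,n_2} = \sqrt{2}\,\alpha_{2n_1,2n_2}
 + \sum_{k_1 \geq 2} (-1)^{k_1}(\alpha_{2n_1 k_1,0} - \sqrt{2}\,\alpha_{2n_1 k_1,2n_2})
 + \cdots
$$
and the analogous expression for $\tilde{\cE}^{(2)}_{n_1,n_2}$, where the only differences are the presence (or absence) of the factors $(-1)^{k_1}$, $(-1)^{k_2}$ and of a single internal sign inside each bracket. Applying the triangle inequality to every sum and to each bracketed combination immediately yields
$$
|\tilde{\cE}^{(1)}_{n_1,n_2}| \leq |\theta_{n_1,n_2}|, \qquad
|\tilde{\cE}^{(2)}_{n_1,n_2}| \leq |\theta_{n_1,n_2}|,
$$
so that $\max(|\tilde{\cE}^{(1)}_{n_1,n_2}|,|\tilde{\cE}^{(2)}_{n_1,n_2}|) \leq |\theta_{n_1,n_2}|$.

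Second, combining this majorization with the hypothesis $|\theta_{n_1,n_2}| < |\alpha_{2n_1,0} + \alpha_{0,2n_2}|$ gives exactly the hypothesis of Corollary~\ref{corollary1}, namely
$$
\max(|\tilde{\cE}^{(1)}_{n_1,n_2}|,|\tilde{\cE}^{(2)}_{n_1,n_2}|) < |\alpha_{2n_1,0} + \alpha_{0,2n_2}|.
$$
The conclusion of Corollary~\ref{corollary1}, which in turn is the bracketing statement of Theorem~\ref{theorem2}, therefore follows at once.

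There is no serious obstacle here: the only care required is in verifying that the bookkeeping of signs in $\tilde{\cE}^{(1)}$ and $\tilde{\cE}^{(2)}$ collapses cleanly into $\theta_{n_1,n_2}$ when absolute values are taken, in particular that the single double-sum, the two single sums, and the two mixed-index sums all appear with matching multiplicities and coefficients ($1$ or $\sqrt{2}$). Once this matching is checked, the result is a one-line consequence of the triangle inequality plus Corollary~\ref{corollary1}, and no further estimates on the Fourier coefficients $\alpha_{i,j}$ or the Gauss/anti-Gauss nodes are needed.
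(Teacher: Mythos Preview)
Your proposal is correct and matches the paper's own proof essentially line for line: the paper also bounds $\max(|\tilde{\cE}^{(1)}_{n_1,n_2}|,|\tilde{\cE}^{(2)}_{n_1,n_2}|)$ by $|\theta_{n_1,n_2}|$ via the triangle inequality and then invokes the hypothesis together with Theorem~\ref{theorem2} (equivalently, Corollary~\ref{corollary1}). The only cosmetic difference is that the paper cites Theorem~\ref{theorem2} directly rather than passing through Corollary~\ref{corollary1}, but this is the same reasoning.
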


\begin{proof}
By using the triangle inequality and taking into account the hypothesis, we have
\begin{equation*}
\max(|\tilde{\cE}^{(1)}_{n_1,n_2}|,|\tilde{\cE}^{(2)}_{n_1,n_2}|) \leq |\theta_{n_1,n_2}| \leq |\alpha_{2n_1,0}+\alpha_{0,2n_2} |,
\end{equation*}
which yields the assertion, by virtue of Theorem \ref{theorem2}.
\end{proof}

\section{Nystr\"om methods and the averaged Nystr\"om interpolant}\label{sec:nystrom}

The aim of this section is to approximate the solution of \eqref{Fredholm}
by an interpolant function whose construction is based on Gauss and
anti-Gauss cubature rules \eqref{Gn} and \eqref{A}.

If the right hand side in equation \eqref{Fredholm} has a low regularity
at $\pm 1$, the solution inherits the same smoothness.
The same happens if the kernel exhibits a similar behaviour at $\pm 1$ with
respect to the external variable $\bm{y}$.
Therefore, we solve the equation in a suitable weighted space.
Let us introduce the weight function
\begin{equation}\label{u}
u(\xx) =u_1(x_1) u_2(x_2),
\end{equation}
with $u_i(x_i)=(1-x_i)^{\gamma_i} (1+x_i)^{\delta_i}$ for $\gamma_i,\delta_i \geq 0$ and $i=1,2.$
%\begin{equation*}
% u_i(x_i)=(1-x_i)^{\gamma_i} (1+x_i)^{\delta_i},
% \quad \gamma_i,\delta_i \geq 0, \quad i=1,2.
%\end{equation*}
We search for the solution of \eqref{Fredholm} in the space $C_u$ of all
functions $f$ continuous
in the interior of the square $\cS$ and such that
\begin{align*}
\begin{cases}
\displaystyle \lim_{x_1 \to \pm 1} (fu)(x_1,x_2)  =0, \qquad  \forall x_2 \in
[-1,1], \\
\displaystyle \lim_{x_2 \to \pm 1} (fu)(x_1,x_2)=0, \qquad  \forall x_1 \in
[-1,1],
\end{cases}
\end{align*}
endowed with the norm
$$
\|f\|_{C_u}=\|fu\|_\infty=\sup_{\xx\in\cS} |(fu)(\xx)|.
$$
If $\gamma_i=\delta_i=0$ for $i=1,2$, then $C_u$ coincides with the set
of all continuous functions on the square, i.e., $C_u \equiv C(\cS)$.
If any partial derivative of the function $f$ has one or more
singularities at the boundary of
$\cS$, then the corresponding parameter $\gamma_i$ or $\delta_i$ is set
to a positive value in order to compensate the singularity.

This approach amounts to solving the weighted equation
\begin{equation}\label{weq}
(fu)(\yy) - \int_{\cS}  k(\xx,\yy) \frac{u(\yy)}{u(\xx)}
(fu)(\xx) w(\xx) d\xx = (gu)(\yy),
\end{equation}
in the space $C(\cS)$ of continuous functions on the square.

To deal with smoother functions having some discontinuous derivatives on the
boundary of $\cS$, we introduce the Sobolev-type space
$$
W^r_u=\{f\in C_u: \|f_{x_i}^{(r)}\varphi^{r}u\|_\infty<\infty,\ i=1,2\},
$$
where $\varphi(z)=\sqrt{1-z^2}$. The superscript $(r)$ denotes the $r$th
derivative of the univariate function $f_{x_i}$, obtained by fixing either
$x_1$ or $x_2$ in the function $f$.
We equip $W^r_u$ with the norm
$$
\|f\|_{W^r_u}=\|fu\|_\infty+\max_{i=1,2}\|f_{x_i}^{(r)}\varphi^{r}u\|_\infty.
$$

The error of best polynomial approximation in $C_u$ can be defined as
$$
E_{m,n}(f)_u=\inf_{p\in\mathbb{P}_{m,n}}\|[f-p]u\|_\infty.
$$

From now on, the symbol $\cC$ will denote a positive constant and we
will use the notation $\cC \neq \cC(a,b,\ldots)$ to say that
$\cC$ is independent of the parameters $a,b,\ldots$, and
$\cC=\cC(a,b,\ldots)$ to say that it depends on them.
Moreover, if $A,B>0$ are quantities depending on some parameters, we will write
$A \sim B$, if there exists a positive constant $\cC \neq
\cC(A,B)$ such that $\frac{B}{\cC} \leq A \leq C B.$

Next proposition gives an estimate for the above error in Sobolev-type spaces.

\begin{proposition}
For each $f\in W^r_u$, it holds
$$
E_{m,n}(f)_u \leq \cC\left[ \frac{1}{m^r} + \frac{1}{n^r} \right]
\cdot \max_{i=1,2}\|f_{x_i}^{(r)}\varphi^{r}u\|_\infty,
$$
where $\cC \neq \cC(m,n,f)$.
\end{proposition}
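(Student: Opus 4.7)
The plan is to construct a bivariate polynomial approximant as the tensor product of two univariate near-best polynomial operators, one adapted to each of the Jacobi weights $u_1,u_2$. I would pick linear operators $L_m^{(1)}$ and $L_n^{(2)}$ of, say, de la Vall\'ee Poussin or generalized Jackson type, acting on univariate functions with respect to the weights $u_1$ and $u_2$. These are standard tools in weighted approximation theory for Jacobi weights and simultaneously satisfy (i) uniform boundedness in the weighted sup norm, $\|(L_m g)u_\ell\|_\infty \leq \cC\|gu_\ell\|_\infty$, and (ii) the univariate Jackson estimate
$$
\|(g - L_m g) u_\ell\|_\infty \leq \frac{\cC}{m^r}\|g^{(r)}\varphi^r u_\ell\|_\infty
$$
for any $g$ in the corresponding one-dimensional Sobolev-type space.

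Next, I would set $T_{m,n} f := L_m^{(1)}(L_n^{(2)} f)$, where $L_n^{(2)}$ acts on $f$ in the variable $x_2$ (for each fixed $x_1$) and then $L_m^{(1)}$ acts in $x_1$. By linearity and the polynomial nature of each factor, $T_{m,n} f \in \mathbb{P}_{m,n}$. The whole proof then rests on the elementary splitting
$$
f - T_{m,n} f = (I - L_m^{(1)}) f + L_m^{(1)}(I - L_n^{(2)}) f,
$$
which decouples the bivariate error into two pieces, each controlled by a univariate estimate applied in a single variable.

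For the first term, freezing $x_2$ and applying the univariate Jackson estimate in $x_1$ yields
$$
|(f - L_m^{(1)} f)(x_1,x_2)| u_1(x_1) \leq \frac{\cC}{m^r}\|f_{x_1}^{(r)}(\cdot,x_2)\varphi^r u_1\|_\infty;
$$
multiplying by $u_2(x_2)$, which is bounded on $[-1,1]$ since $\gamma_2,\delta_2\geq 0$, and taking the supremum over $(x_1,x_2)$, produces $\frac{\cC}{m^r}\|f_{x_1}^{(r)}\varphi^r u\|_\infty$. For the second term, I would first use the uniform boundedness of $L_m^{(1)}$ (applied pointwise in $x_2$) to reduce to $\cC\|(I-L_n^{(2)})f\cdot u\|_\infty$, and then rerun the argument with the roles of the two variables interchanged, obtaining $\frac{\cC}{n^r}\|f_{x_2}^{(r)}\varphi^r u\|_\infty$. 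Summing the two contributions and bounding each seminorm by $\max_{i=1,2}\|f_{x_i}^{(r)}\varphi^r u\|_\infty$ yields the stated inequality, since $T_{m,n}f\in\mathbb{P}_{m,n}$ implies $E_{m,n}(f)_u\leq \|(f-T_{m,n}f)u\|_\infty$.

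The main obstacle is not the decomposition itself, which is routine once tensored operators are available, but rather ensuring the existence of a univariate operator $L_m$ that is simultaneously near-best and uniformly bounded in the weighted sup norm on $C_{u_\ell}$, for the admissible Jacobi-parameter range considered. This is a classical fact from the weighted approximation literature on $[-1,1]$; once it is quoted, the rest of the proof is essentially bookkeeping plus the boundedness of $u$ on the square.
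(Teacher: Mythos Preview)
Your proposal is correct and is essentially the same argument as the paper's, only more explicit: the paper quotes from \cite{OccorsioRusso2018} the decomposition
\[
E_{m,n}(f)_u \leq \cC\Big[\sup_{x_2}u_2(x_2)\,E_m(f_{x_2})_{u_1} + \sup_{x_1}u_1(x_1)\,E_n(f_{x_1})_{u_2}\Big],
\]
and then applies the univariate weighted Favard inequality $E_\ell(g)_{u_i}\leq \cC\ell^{-r}\|g^{(r)}\varphi^{r}u_i\|_\infty$ from \cite{MMlibro}. Your tensor-product construction with bounded near-best operators $L_m^{(1)},L_n^{(2)}$ and the splitting $f-T_{m,n}f=(I-L_m^{(1)})f+L_m^{(1)}(I-L_n^{(2)})f$ is exactly how that cited decomposition is proved, so you are supplying the details the paper outsources to a reference.
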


\begin{proof}
Following \cite[Theorem 2.1]{OccorsioRusso2018}, one has
\begin{equation*}
E_{m,n}(f)_u \leq \cC \left[\sup_{x_2 \in [-1,1]} u_2(x_2) E_m(f_{x_2})_{u_1}+ \sup_{x_1 \in [-1,1]} u_1(x_1) E_n(f_{x_1})_{u_2} \right],
\end{equation*}
where $E_\ell(g)_{u_i}$ is the $u_i$-weighted best approximation error of the
univariate function $g$ by a polynomial of degree at most $\ell$; see
\cite[estimate (2.5.16)]{MMlibro}.
Then, by the inequality
$$
E_\ell(g)_{u_i} \leq \frac{\cC}{\ell^r}
\|g^{(r)}\varphi^{r}u_i\|_\infty,
$$
from \cite[estimate (2.5.22)]{MMlibro}, we obtain the assertion.
\end{proof}

To ease the exposition, we introduce a multi-index notation, where an index may
take integer vectorial values. Such indexes will be denoted by bold letters.
Let $\nn=(n_1,n_2)$ and consider the set of bi-indices
\begin{equation*}
\mathfrak{I}_\nn=\left\{\ii=(i_1,i_2) :
1\leq i_1\leq n_1,\, 1\leq i_2\leq n_2\right\}.
\end{equation*}
For $\ii\in\mathfrak{I}_\nn$, consistently with the notation
$\xx=(x_1,x_2)$, we define $\xx_\ii=(x_{i_1}^{(1)},x_{i_2}^{(2)})$, where
$x_{i_1}^{(1)}$ and $x_{i_2}^{(2)}$ are the Gaussian nodes introduced in the
cubature rule \eqref{Gn}, which we will denote by $\cG_\nn$.

Let us now write the classical Nystr\"om method for the integral equation
\eqref{Fredholm}, based on approximating the operator $K$ by the Gauss cubature
formula $\cG_\nn$.
This leads to the functional equation
\begin{equation}\label{eqfun}
(I-K_\nn)f_\nn=g,
\end{equation}
where $f_\nn$ is an unknown function approximating $f$ and
$$
(K_\nn f)(\yy) = \sum_{j_1=1}^{n_1}\sum_{j_2=1}^{n_2}
\lambda_{j_1}^{(1)} \lambda_{j_2}^{(2)}
\, k(\xx_\jj,\yy) f(\xx_\jj),
$$
where $\jj=(j_1,j_2)\in\mathfrak{I}_\nn$.

By multiplying both sides of \eqref{eqfun} by the weight function $u$ and
collocating at the points $\xx_\ii$, $\ii\in\mathfrak{I}_\nn$, we obtain the
linear system
\begin{equation}\label{system}
\sum_{j_1=1}^{n_1}\sum_{j_2=1}^{n_2}
\left[\delta_{i_1,j_1}\delta_{i_2,j_2}-\lambda_{j_1}^{(1)} \lambda_{j_2}^{(2)}
\, \dfrac{u(\xx_\ii)}{u(\xx_\jj)} \, k(\xx_\jj,\xx_\ii)\right] \,
a_{j_1,j_2}= (gu)(\xx_\ii),
\end{equation}
where $\delta_{i,k}$ is the Kronecker
symbol, and $a_{j_1,j_2}=(fu)(\xx_\jj)$ are the unknowns.
By defining $\delta_{\ii,\jj}=\delta_{i_1,j_1}\delta_{i_2,j_2}$,
$\lambda_\jj=\lambda_{j_1}^{(1)} \lambda_{j_2}^{(2)}$, and collapsing the two
summations into a single one, \eqref{system} can be rewritten as
\begin{equation}\label{newsystem}
\sum_{\jj\in\mathfrak{I}_\nn}
\left[\delta_{\ii,\jj}-\lambda_\jj
\, \dfrac{u(\xx_\ii)}{u(\xx_\jj)} \, k(\xx_\jj,\xx_\ii)\right] \,
a_\jj= (gu)(\xx_\ii), \qquad \ii\in\mathfrak{I}_\nn.
\end{equation}
This corresponds to the Nystr\"om method for the weighted equation \eqref{weq}.

We remark that the quantities $k(\xx_\jj,\xx_\ii)$ are
entries $k_{i_1,i_2,j_1,j_2}$ of a fourth order tensor
$\cK\in\R^{I_1\times I_2\times I_1 \times I_2}$, where
$I_k=\{1,2,\ldots,n_k\}$, $k=1,2$; see~\cite{kolda2009}.
Moreover, the tensor-matrix product in \eqref{newsystem} and the
tensor-tensor product that will be used in next section corresponds to
the so-called Einstein product \cite{brazell2013,einstein1916}.
We prefer to adopt the multi-index formalism, used, e.g., in
\cite{mrs03,mrs06a,mrs06b}, because it is closer to the usual matrix
notation.

The solution of system \eqref{newsystem} provides the unique solution of
equation \eqref{eqfun} and vice-versa. In fact, if $a^*_\jj$ is a solution
of \eqref{newsystem}, then we can determine the weighted solution of
\eqref{eqfun} by the so-called Nystr\"om interpolant
\begin{equation}\label{interpolantG}
(f_\nn u)(\xx)=(gu)(\xx)+ u(\xx)
\sum_{\jj\in\mathfrak{I}_\nn} \frac{\lambda_\jj}{u(\xx_\jj)} \,
k(\xx_\jj,\xx)  \, a^{*}_\jj.
\end{equation}
Vice-versa, if we evaluate \eqref{interpolantG} at the cubature points we
obtain the solution of \eqref{newsystem}.

Now, we apply the Nystr\"om method to the anti-Gaussian cubature
formula $\cA_{\nn+\bm{1}}$, with $\bm{1}=(1,1)$, as an approximation
for the operator $K$, obtaining the equation
\begin{equation}\label{eqfunanti}
(I-\widetilde{K}_{\nn+\bm{1}})\tilde{f}_{\nn+\bm{1}}=g,
\end{equation}
where $\tilde{f}_{\nn+\bm{1}}$ is the unknown and
$$
(\widetilde{K}_{\nn+\bm{1}} f)(\yy) = \sum_{\jj\in\mathfrak{I}_{\nn+\bm{1}}}
\mu_\jj \, k(\etab_\jj,\yy) f(\etab_\jj),
$$
with $\mu_\jj=\mu_{j_1}^{(1)}\mu_{j_2}^{(2)}$ and
$\etab_\jj=(\eta_{j_1}^{(1)},\eta_{j_2}^{(2)})$.

Collocating equation \eqref{eqfunanti} at the knots $\etab_\ii$
and a multiplication of both sides by $u(\etab_\ii)$ leads to the linear system
\begin{equation}\label{systemanti}
\sum_{\jj\in\mathfrak{I}_{\nn+\bm{1}}}
\left[\delta_{\ii,\jj}-\mu_\jj
\, \dfrac{u(\etab_\ii)}{u(\etab_\jj)} \, k(\etab_\jj,\etab_\ii)\right] \,
\tilde{a}_\jj= (gu)(\etab_\ii), \qquad \ii\in\mathfrak{I}_{\nn+\bm{1}},
\end{equation}
where $\tilde{a}_\jj=(fu)(\etab_\jj)$ are the unknowns.

If $\tilde{a}^*_\jj$ is the
solution of \eqref{systemanti}, then the Nystr\"om interpolant
\begin{equation}\label{interpolantA}
(\tilde{f}_{\nn+\bm{1}} u)(\xx)=(gu)(\xx)+ u(\xx)
\sum_{\jj\in\mathfrak{I}_{\nn+\bm{1}}} \frac{\mu_\jj}{u(\etab_\jj)} \,
k(\etab_\jj,\xx)  \, \tilde{a}^{*}_\jj,
\end{equation}
solves \eqref{eqfunanti}, and hence approximates the solution of
\eqref{Fredholm}.
Vice-versa, if we evaluate the above function at the cubature points we
obtain the solution of \eqref{systemanti}.

\begin{theorem}\label{teo:convergenza}
Let $\ker \{I+K\}=\{0\}$ in $C_u$ and let the parameters of the weight
$u$ given in \eqref{u} be such that
$$0 \leq \gamma_i < \alpha_i+1, \qquad 0 \leq \beta_i < \delta_i+1,  \qquad i=1,2. $$
We also assume that
\begin{equation*}
g \in W_u^r, \quad \sup_{\xx \in \cS} \|k_{\xx}\|_{W^r_u} < \infty, \quad
\sup_{\yy \in \cS} u(\yy) \|k_{\yy}\|_{W^r} < \infty.
\end{equation*}
Then, there exist a sufficiently large bi-index $\nn_0$ such that, for
$\nn\geq\nn_0$, equations \eqref{eqfun} and \eqref{eqfunanti} admit a
unique solution $f^*_\nn \in C_u$ and $\tilde{f}^*_{\nn+\bm{1}} \in C_u$,
respectively.
Moreover, if $f^*$ is the unique solution of \eqref{Fredholm}, then
\begin{equation}\label{error}
\max \left\{ \|(f^*-f^*_\nn)u\|_\infty,
\|(f^*-\tilde{f}^*_{\nn+\bm{1}})u\|_\infty \right\}
\leq \cC \left[ \frac{1}{n_1^r} + \frac{1}{n_2^r} \right] \cdot \max_{i=1,2}\|f_{x_i}^{*(r)}\varphi^{r}u\|_\infty,
\end{equation}
where $\cC\neq \cC(\nn,f).$
\end{theorem}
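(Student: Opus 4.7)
The plan is to cast both discretized equations \eqref{eqfun} and \eqref{eqfunanti} into the classical framework for the convergence of Nystr\"om methods for second-kind operator equations, in the spirit of Anselone's collectively compact operator theory. Concretely, I would show that in $C_u$: (i) the operator $K$ is compact, (ii) the approximating operators $K_\nn$ and $\widetilde{K}_{\nn+\bm{1}}$ are uniformly bounded, (iii) they converge pointwise to $K$, and (iv) they form collectively compact sequences. Combined with the injectivity assumption in $C_u$, these yield invertibility of $I-K_\nn$ and $I-\widetilde{K}_{\nn+\bm{1}}$ in $C_u$ with uniformly bounded inverses for $\nn$ large enough, so that the error estimate reduces to a cubature error bound applied to the exact solution.

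In detail, I would first check that the conditions $0\le\gamma_i<\alpha_i+1$ and the analogous one between $\delta_i$ and $\beta_i$ ensure that in the weighted equation \eqref{weq} the ratio $u(\yy)/u(\xx)$ is integrable against the Jacobi weight $w$, so that $K$ maps $C_u$ continuously into itself; compactness of $K$ then follows from the uniform $W^r_u$-regularity of $\xx\mapsto k(\xx,\yy)$ together with Ascoli--Arzel\`a applied on the unit ball of $C_u$, using the condition $\sup_\yy u(\yy)\|k_\yy\|_{W^r}<\infty$ to get equicontinuity in $\yy$. The uniform bound on $\|K_\nn\|_{C_u\to C_u}$ relies on the stability of the Gauss--Jacobi rule in weighted uniform norms, which extends to the bivariate case by tensor product; the same argument yields a uniform bound for $\widetilde{K}_{\nn+\bm{1}}$ via the stability of the anti-Gauss rule recalled after \eqref{erre1} and established in \cite{DFR2020,FRRS2022}. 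For pointwise convergence $(K-K_\nn)f\to 0$ in $C_u$, for each fixed $\yy$ the quantity $(K_\nn f)(\yy)$ is the Gauss cubature applied to $\xx\mapsto k(\xx,\yy)f(\xx)$, so the cubature error estimate from \cite[Proposition~2.2]{OccorsioRusso2011} gives convergence at the rate of the best weighted polynomial approximation; an analogous bound handles the anti-Gauss case by virtue of its degree of exactness (see \eqref{errorA}) and stability. Collective compactness of the two families then follows from Ascoli--Arzel\`a applied to the image of the unit ball of $C_u$, once the equicontinuity in $\yy$ has been secured by the $W^r$-regularity of $\yy\mapsto k(\xx,\yy)$.

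Once the invertibility of $I-K_\nn$ and $I-\widetilde{K}_{\nn+\bm{1}}$ with a uniform bound $\cC$ on the inverses has been established, the standard perturbation identity
$$
f^{*}-f^{*}_\nn = (I-K_\nn)^{-1}(K-K_\nn)f^{*},
$$
together with its analogue for $\tilde f^{*}_{\nn+\bm{1}}$, yields
$$
\max\left\{\|(f^{*}-f^{*}_\nn)u\|_\infty,\,\|(f^{*}-\tilde f^{*}_{\nn+\bm{1}})u\|_\infty\right\}
\leq \cC\,\|(K-K_\nn)f^{*}\|_{C_u} + \cC\,\|(K-\widetilde{K}_{\nn+\bm{1}})f^{*}\|_{C_u}.
$$
Since $g\in W^r_u$ and the kernel assumptions transfer the regularity through $(I-K)^{-1}$, the exact solution $f^{*}$ lies in $W^r_u$. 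Bounding the cubature errors on the right-hand side by the best polynomial approximation of the integrand through the estimate of the preceding proposition applied to $\xx\mapsto k(\xx,\yy)f^{*}(\xx)$, uniformly in $\yy$, produces the claimed rate $\cC\bigl[1/n_1^r+1/n_2^r\bigr]\max_{i=1,2}\|f^{*(r)}_{x_i}\varphi^r u\|_\infty$.

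The main technical obstacle I expect is in establishing (ii) and (iii) in the weighted setting: the factor $u(\xx_\ii)/u(\xx_\jj)$ entering $K_\nn$ and $\widetilde{K}_{\nn+\bm{1}}$ can become large when the Gauss and anti-Gauss nodes cluster near $\pm 1$, so one must estimate the weighted Christoffel numbers $\lambda_\jj/u(\xx_\jj)$ and $\mu_\jj/u(\etab_\jj)$, and show that the associated discrete sums are bounded by $\|fu\|_\infty$ uniformly in $\nn$. This is precisely where the strict inequalities relating $(\gamma_i,\delta_i)$ to $(\alpha_i,\beta_i)$ enter, through classical Jacobi estimates for Christoffel functions; once this weighted stability is in place, the bivariate extension of the univariate Nystr\"om convergence arguments is essentially routine.
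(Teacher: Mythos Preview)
Your proposal is correct and follows essentially the same route as the paper, which does not give a self-contained proof but defers to \cite{OccorsioRusso2011} for the Gauss case and notes that the anti-Gauss case is treated along the same line (see also \cite{FermoRusso}); those references implement precisely the collectively compact operator argument you outline, with the weighted stability of the Gauss/anti-Gauss rules playing the role you describe. One minor point: in your perturbation identity you should bound each Nystr\"om error by its own cubature remainder rather than by the sum, i.e., $\|(f^*-f^*_\nn)u\|_\infty\le\cC\|(K-K_\nn)f^*\|_{C_u}$ and similarly for the anti-Gauss interpolant, but this does not affect the conclusion.
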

\begin{proof}
The stability of the Nystr\"om method based on the Gauss rule as well as
the error estimate \eqref{error} has been proved in
\cite{OccorsioRusso2011} (see also \cite[Theorem~4.1]{Laguardia2022} for
the case $u \equiv 1$).
The same line of the theorem in~\cite{OccorsioRusso2011} can be followed to
prove the assertion related to the Nystr\"om method concerning the anti-Gauss
rule; see also \cite[Theorem~3.1]{FermoRusso}. 
\end{proof}
\begin{corollary} \label{cor:coeff}
Let $f^*$ be the unique solution of \eqref{Fredholm}.
Consider the orthogonal expansion of the kernel $k$ multiplied by $f^*$ and
its approximations $f_\nn$ and $\tilde{f}_{\nn+\bm{1}}$
$$
\begin{aligned}
k(\xx,\yy) f^*(\xx) &= \sum_{i=0}^\infty \sum_{j=0}^\infty
\alpha_{i,j}(\yy) p_{i,j}(\xx), \,
& \alpha_{i,j}(\yy) &= \left(b_0^{(1)}
b_0^{(2)}\right)^{-\frac{1}{2}}(K(f^*p_{i,j}))(\yy), \\
k(\xx,\yy) f_\nn(\xx) &= \sum_{i=0}^\infty \sum_{j=0}^\infty
\alpha_{i,j}^\nn(\yy) p_{i,j}(\xx), \,
& \alpha_{i,j}^\nn(\yy) &= \left(b_0^{(1)}
b_0^{(2)}\right)^{-\frac{1}{2}}(K(f_\nn p_{i,j}))(\yy), \\
k(\xx,\yy) \tilde{f}_{\nn+\bm{1}}(\xx) &= \sum_{i=0}^\infty \sum_{j=0}^\infty
\tilde{\alpha}_{i,j}^{\nn+\bm{1}}(\yy) p_{i,j}(\xx), \hspace{-0.1cm}
& \tilde{\alpha}_{i,j}^{\nn+\bm{1}}(\yy) &= \left(b_0^{(1)}
b_0^{(2)}\right)^{-\frac{1}{2}}(K(\tilde{f}_{\nn+\bm{1}} p_{i,j}))(\yy).
\end{aligned}
$$
Then, under the same assumption of Theorem~\ref{teo:convergenza},
$$
\lim_{n_1,n_2\to\infty} \| [\alpha_{i,j}^\nn-\alpha_{i,j}]u \|_\infty = 0
\qquad\text{and}\qquad
\lim_{n_1,n_2\to\infty} \| [\tilde{\alpha}_{i,j}^{\nn+\bm{1}}-\alpha_{i,j}]u
	\|_\infty = 0.
$$
\end{corollary}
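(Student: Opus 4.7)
The plan is to reduce both convergence statements to the Nystr\"om error estimate already established in Theorem~\ref{teo:convergenza}. By linearity of the operator $K$ in the integrand and the definitions of the coefficients, the difference admits the clean representation
\begin{equation*}
\alpha_{i,j}^\nn(\yy) - \alpha_{i,j}(\yy) = \bigl(b_0^{(1)} b_0^{(2)}\bigr)^{-\frac{1}{2}} \bigl( K\bigl((f_\nn - f^*)\, p_{i,j}\bigr) \bigr)(\yy).
\end{equation*}
Multiplying by $u(\yy)$, inserting $u(\xx)/u(\xx)$ inside the defining integral of $K$, and extracting the factor $\|(f^*-f_\nn)u\|_\infty$ from the integral, I would obtain
\begin{equation*}
u(\yy)\bigl|\alpha_{i,j}^\nn(\yy) - \alpha_{i,j}(\yy)\bigr| \leq \bigl(b_0^{(1)} b_0^{(2)}\bigr)^{-\frac{1}{2}} \,\|(f^* - f_\nn)u\|_\infty \int_{\cS} u(\yy)|k(\xx,\yy)| \frac{|p_{i,j}(\xx)|}{u(\xx)}\, w(\xx)\,d\xx.
\end{equation*}

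Next I would show that the remaining integral is bounded uniformly in $\yy$ by a constant depending only on $i$ and $j$. The hypothesis $\sup_{\xx}\|k_\xx\|_{W_u^r} < \infty$ from Theorem~\ref{teo:convergenza} controls $u(\yy)|k(\xx,\yy)|$ uniformly in both variables, and the weight-parameter constraints in the same theorem force the Jacobi exponents of $w(\xx)/u(\xx)$ to remain strictly above $-1$. Since $p_{i,j}$ is a fixed orthogonal polynomial, the resulting integral $\int_\cS |p_{i,j}(\xx)|\, w(\xx)/u(\xx)\, d\xx$ is therefore finite and independent of $\yy$ and $\nn$.

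Taking the supremum in $\yy\in\cS$ on the left-hand side and invoking Theorem~\ref{teo:convergenza}, which guarantees $\|(f^*-f_\nn)u\|_\infty\to 0$ as $n_1,n_2\to\infty$, closes the Gauss case. The argument for $\tilde{\alpha}_{i,j}^{\nn+\bm{1}}$ is verbatim the same after substituting $\tilde{f}_{\nn+\bm{1}}$ for $f_\nn$, because the cubature rule used to build the Nystr\"om scheme enters the argument only through the weighted-uniform convergence provided by Theorem~\ref{teo:convergenza}, which covers both interpolants.

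The main obstacle is bookkeeping rather than substance: one has to check carefully that the weight constraints in Theorem~\ref{teo:convergenza}, combined with the hypothesis on $k_\xx$, make the auxiliary integral finite and uniform in $\yy$. Once this is verified, the convergence of the coefficients is an immediate consequence of the Nystr\"om error estimate already in hand.
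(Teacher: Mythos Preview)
Your proposal is correct and follows the natural line of argument: linearity of $K$ reduces the coefficient difference to an integral against $(f_\nn-f^*)u$, the hypothesis $\sup_{\xx}\|k_\xx\|_{W_u^r}<\infty$ together with the weight constraints of Theorem~\ref{teo:convergenza} makes the remaining integral factor finite and uniform in $\yy$, and Theorem~\ref{teo:convergenza} then gives the conclusion. The paper does not spell out a proof here but refers to \cite[Theorem~4]{DFR2020}, whose argument in the univariate setting proceeds along exactly the same lines you describe, so your approach matches the intended one.
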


\begin{proof}
The proof follows the same line of Theorem~4 from \cite{DFR2020}.
\end{proof}

\begin{theorem}\label{teo:braketsol}
Let us assume that inequality \eqref{condsign} is satisfied and the
assumptions of Theorem~\ref{teo:convergenza} are verified.
Then, for any $\yy\in\cS$, either
\begin{equation*}
\tilde{f}_{\nn+\bm{1}}(\yy) \leq f^*(\yy) \leq f_\nn(\yy) \quad \text{or}
\quad f_\nn(\yy) \leq f^*(\yy)  \leq \tilde{f}_{\nn+\bm{1}}(\yy).
\end{equation*}
\end{theorem}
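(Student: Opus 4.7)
The plan is to reduce the bracketing claim, which is about the Nyström interpolants evaluated at an arbitrary point $\yy\in\cS$, to the one-dimensional cubature bracketing of Theorem~\ref{theorem2} applied to the $\yy$-dependent integrand $F^*_\yy(\xx):=k(\xx,\yy)f^*(\xx)$. Starting from \eqref{interpolantG} and \eqref{interpolantA}, together with $f^*(\yy)=g(\yy)+(Kf^*)(\yy)$, I would write
$$
f^*(\yy)-f_\nn(\yy) = \cI(F^*_\yy)-\cG_\nn(F^\nn_\yy),\qquad
f^*(\yy)-\tilde f_{\nn+\bm{1}}(\yy) = \cI(F^*_\yy)-\cA_{\nn+\bm{1}}(\tilde F^{\nn+\bm{1}}_\yy),
$$
where $F^\nn_\yy(\xx):=k(\xx,\yy)f_\nn(\xx)$ and $\tilde F^{\nn+\bm{1}}_\yy(\xx):=k(\xx,\yy)\tilde f_{\nn+\bm{1}}(\xx)$. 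This follows because, evaluating the Nyström interpolants at the cubature nodes recovers the values $a^*_\jj/u(\xx_\jj)$ and $\tilde a^*_\jj/u(\etab_\jj)$ which are exactly $f_\nn(\xx_\jj)$ and $\tilde f_{\nn+\bm{1}}(\etab_\jj)$, so the sums in \eqref{interpolantG} and \eqref{interpolantA} become $\cG_\nn(F^\nn_\yy)$ and $\cA_{\nn+\bm{1}}(\tilde F^{\nn+\bm{1}}_\yy)$ respectively.

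Next I would apply Theorem~\ref{theorem2} to the function $F^*_\yy$, expanding it as in \eqref{expansion2d} with coefficients $\alpha_{i,j}(\yy)$ defined in Corollary~\ref{cor:coeff}. Under hypothesis \eqref{condsign} (interpreted pointwise in $\yy$, which is the content of the assumed \eqref{condsign}), one obtains
$$
\cI(F^*_\yy)-\cG_\nn(F^*_\yy)=-S_\nn(\yy)+\cE^{(1)}_\nn(\yy),\qquad
\cI(F^*_\yy)-\cA_{\nn+\bm{1}}(F^*_\yy)=S_\nn(\yy)+\cE^{(2)}_\nn(\yy),
$$
with $|\cE^{(k)}_\nn(\yy)|<|S_\nn(\yy)|$, so that these two errors have strictly opposite signs. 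This is the clean bracketing of $\cI(F^*_\yy)$ by the two cubatures of $F^*_\yy$.

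The remaining step is to absorb the discrepancies
$\cG_\nn(F^\nn_\yy)-\cG_\nn(F^*_\yy)$ and $\cA_{\nn+\bm{1}}(\tilde F^{\nn+\bm{1}}_\yy)-\cA_{\nn+\bm{1}}(F^*_\yy)$ into the small terms $\cE^{(k)}_\nn(\yy)$. I would write each of these two discrepancies as the cubature of $k(\xx,\yy)[f_\nn(\xx)-f^*(\xx)]$ (resp.\ with $\tilde f_{\nn+\bm{1}}$), expand this difference on the orthogonal basis, and invoke Corollary~\ref{cor:coeff}, which asserts that the coefficients $\alpha_{i,j}^\nn(\yy)$ and $\tilde\alpha_{i,j}^{\nn+\bm{1}}(\yy)$ converge to $\alpha_{i,j}(\yy)$ uniformly in $\yy$; combined with the stability of the Gauss and anti-Gauss cubature rules (both are linear functionals with uniformly bounded norms), this yields that the perturbations are $o(S_\nn(\yy))$ as $\nn\to\infty$. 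Consequently, the new ``error terms''
$\hat\cE^{(1)}_\nn(\yy)=\cE^{(1)}_\nn(\yy)+[\cG_\nn(F^*_\yy)-\cG_\nn(F^\nn_\yy)]$ and the analogous $\hat\cE^{(2)}_\nn(\yy)$ still satisfy $\max(|\hat\cE^{(1)}_\nn(\yy)|,|\hat\cE^{(2)}_\nn(\yy)|)<|S_\nn(\yy)|$ for $\nn$ large, whence the signs of $f^*(\yy)-f_\nn(\yy)$ and $f^*(\yy)-\tilde f_{\nn+\bm{1}}(\yy)$ remain opposite, which is exactly the claimed bracketing.

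The main obstacle is this last perturbation step: one must argue that replacing $f^*$ by its Nyström approximants inside the cubature of the kernel does not destroy the sign-bracketing established by Theorem~\ref{theorem2}. The essential ingredients are the uniform convergence of $f_\nn$ and $\tilde f_{\nn+\bm{1}}$ to $f^*$ in $C_u$ from Theorem~\ref{teo:convergenza}, and the coefficient-wise convergence of Corollary~\ref{cor:coeff}, which together ensure that the perturbation terms are strictly dominated by $|S_\nn(\yy)|$ once $\nn$ is sufficiently large.
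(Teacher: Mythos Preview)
Your approach is essentially the same as the paper's: reduce the bracketing of $f^*(\yy)$ between $f_\nn(\yy)$ and $\tilde f_{\nn+\bm{1}}(\yy)$ to the bracketing of $(Kf^*)(\yy)=\cI(F^*_\yy)$ between $(K_\nn f_\nn)(\yy)$ and $(\widetilde K_{\nn+\bm{1}}\tilde f_{\nn+\bm{1}})(\yy)$, then invoke Theorem~\ref{theorem2} on $h_\yy(\xx)=k(\xx,\yy)f^*(\xx)$ together with Corollary~\ref{cor:coeff}. If anything, you are more explicit than the paper about the perturbation step (replacing $f^*$ by $f_\nn$ and $\tilde f_{\nn+\bm{1}}$ inside the cubatures), which the paper handles in a single sentence.
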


\begin{proof}
By \eqref{Fredholm}, $f=Kf+g$. Proceeding similarly with equations
\eqref{eqfun} and \eqref{eqfunanti}, we deduce that to prove the assertion
it is sufficient to state either of the following two relations
$$
\begin{gathered}
(\widetilde{K}_{\nn+\bm{1}}\tilde{f}_{\nn+\bm{1}})(\yy) \leq (Kf^*)(\yy)
\leq (K_\nn f_\nn)(\yy) \\
(K_\nn f_\nn)(\yy) \leq (Kf^*)(\yy) \leq
(\widetilde{K}_{\nn+\bm{1}}\tilde{f}_{\nn+\bm{1}})(\yy).
\end{gathered}
$$
By virtue of the assumptions and Corollary~\ref{cor:coeff}, the above
inequalities follow by applying Theorem~\ref{theorem2} to the function
$h_\yy(\xx)=k(\xx,\yy)f(\xx)$.
\end{proof}

Once we have proven under which conditions the unique solution $f^*$ of
the integral equation is bracketed by the Nystr\"om interpolants for
any $\yy\in\cS$, we can introduce the averaged Nystr\"om interpolant
\begin{equation}\label{averaged}
\mathfrak{f_\nn(\yy)} = \dfrac{1}{2}\left(f_\nn(\yy)+\tilde{f}_{\nn+\bm{1}}(\yy)\right), \quad \yy\in\cS,
\end{equation}
which yields a more accurate approximated solution.

\section{Solving the linear systems}\label{sec:system}

In this section we describe a tensor representation of systems
\eqref{newsystem} and \eqref{systemanti}, we study their condition number,
and propose numerical methods for their resolution.
In the following, the product between two tensors $\cM$, $\cN$, and
between a tensor $\cM$ and a matrix $\bm{a}$, must be considered in the
multi-index sense, that is, 
$$
(\cM\cN)_{\ii,\jj} = \sum_{\kk\in\mathfrak{I}_\nn} \cM_{\ii,\kk} \cN_{\kk,\jj}, \qquad
(\cM\bm{a})_\ii = \sum_{\kk\in\mathfrak{I}_\nn} \cM_{\ii,\kk} \bm{a}_\kk,
\qquad \ii,\jj\in\mathfrak{I}_\nn.
$$
The inverse tensor is such that $\cM\cM^{-1}=\cI$, where
$(\cI)_{\ii,\jj}=\delta_{\ii,\jj}$.
Moreover, the infinity norm $\|\cM\|_\infty$ is defined in the usual
operatorial sense, and the condition number is
$\kappa_\infty(\cM)=\|\cM\|_\infty\|\cM^{-1}\|_\infty$.

Let us introduce the notation
\begin{equation*}
\Lambda_\nn=\diag(\lambda_\jj)_{\jj\in\mathfrak{I}_\nn}, \quad\text{with}\quad
(\Lambda_\nn)_{\ii,\jj} = \begin{cases}
\lambda_\jj, & \ii=\jj, \\
0 & \ii\neq\jj.
\end{cases}
\end{equation*}
We give a compact representation of systems \eqref{newsystem} and
\eqref{systemanti},
\begin{align}
(\cI_\nn-\cU_\nn\cK_\nn\cU_\nn^{-1}\Lambda_\nn)\aa
&=\hh, \label{systmatrixG} \\
(\cI_{\nn+\bm{1}}-\widetilde{\cU}_{\nn+\bm{1}}\widetilde{\cK}_{\nn+\bm{1}}
\widetilde{\cU}_{\nn+\bm{1}}^{-1}\widetilde{\Lambda}_{\nn+\bm{1}})
\widetilde{\aa} &= \widetilde{\hh}, \label{systmatrixA}
\end{align}
where $\cK_{\ii\jj} = k(\xx_\jj,\xx_\ii)=k(x_{j_1}^{(1)},x_{j_2}^{(2)},x_{i_1}^{(1)},x_{i_2}^{(2)})$, 
%\begin{equation*}
%\cK_{\ii\jj} = k(\xx_\jj,\xx_\ii)
%=k(x_{j_1}^{(1)},x_{j_2}^{(2)},x_{i_1}^{(1)},x_{i_2}^{(2)}),
%\end{equation*}
$\cU_\nn = \diag(u(\xx_\jj))_{\jj\in\mathfrak{I}_\nn}$, and
$\hh = ((gu)(\xx_\ii))_{\ii\in\mathfrak{I}_\nn}$.
Matrices $\widetilde{\cU}_{\nn+\bm{1}}$,
$\widetilde{\cK}_{\nn+\bm{1}}$, $\widetilde{\Lambda}_{\nn+\bm{1}}$, and the
array $\widetilde{\hh}$ are defined similarly.

In the next theorem we state the numerical stability of the Nystr\"om method.

\begin{theorem}\label{teo:cond}
Under the assumptions of Theorem \ref{teo:convergenza}, it holds
$$
\kappa_\infty(\cI_\nn -
\cU_\nn\cK_\nn\cU_\nn^{-1}\Lambda_\nn)
\leq \cC, \qquad
\kappa_\infty(\cI_{\nn+\bm{1}} - \cU_{\nn+\bm{1}}\widetilde{\cK}_{\nn+\bm{1}}
\widetilde{\cU}_{\nn+\bm{1}}^{-1}\widetilde\Lambda_{\nn+\bm{1}})
\leq \cC,
$$
where $\cC$ is independent of $\nn$.
\end{theorem}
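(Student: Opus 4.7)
The plan is to reduce the tensor condition number bound to the classical stability statement that the discrete Nyström operators $I-K_\nn$ and $I-\widetilde{K}_{\nn+\bm{1}}$ are uniformly invertible on $C_u$ for large $\nn$, which is exactly what was used in Theorem~\ref{teo:convergenza}. Since the two assertions are structurally identical, I will carry out the argument for the Gauss-based system \eqref{systmatrixG}; the anti-Gauss case is analogous once one replaces $\lambda_\jj$ by $\mu_\jj$ and $\xx_\jj$ by $\etab_\jj$ and invokes the stability of the anti-Gauss rule in $C_u$ (see the remarks following \eqref{A} and the discussion of stability therein).

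First I would bound the direct operator norm. For a fixed multi-index $\ii$,
$$
\sum_{\jj\in\mathfrak{I}_\nn}
\Bigl|\delta_{\ii,\jj}-\lambda_\jj\,\dfrac{u(\xx_\ii)}{u(\xx_\jj)}\,k(\xx_\jj,\xx_\ii)\Bigr|
\leq 1 + u(\xx_\ii)\,\cG_\nn\!\left(\frac{|k(\cdot,\xx_\ii)|}{u(\cdot)}\right).
$$
The assumptions $0\leq \gamma_i<\alpha_i+1$, $0\leq\delta_i<\beta_i+1$ make the function $(x_1,x_2)\mapsto |k(\xx,\yy)|/u(\xx)$ integrable against $w$ uniformly in $\yy$ (this is precisely the condition that guarantees the weighted kernel operator maps $C_u$ into itself), and stability of the Gauss cubature in the corresponding weighted space, together with the bound $\sup_\yy u(\yy)\|k_\yy\|_{W^r}<\infty$ assumed in Theorem~\ref{teo:convergenza}, yields a bound $\cC$ independent of $\ii$ and $\nn$ for that cubature sum. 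Taking the max over $\ii$ gives $\|\cI_\nn-\cU_\nn\cK_\nn\cU_\nn^{-1}\Lambda_\nn\|_\infty\leq \cC$.

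The main obstacle is the inverse bound, and I would argue it by transferring the estimate to the continuous setting through the Nyström interpolant \eqref{interpolantG}. Given data $\hh=(h_\ii)_{\ii\in\mathfrak{I}_\nn}$, construct any continuous function $g\in C_u$ with $(gu)(\xx_\ii)=h_\ii$ and $\|gu\|_\infty=\|\hh\|_\infty$ (for instance by a weighted Lagrange interpolant at the Gaussian nodes, whose norm is controlled since the weights satisfy \eqref{condparam}). By Theorem~\ref{teo:convergenza} applied to this $g$, the equation \eqref{eqfun} has, for $\nn\geq\nn_0$, a unique solution $f_\nn\in C_u$ with
$$
\|f_\nn u\|_\infty \leq \cC \|gu\|_\infty = \cC\,\|\hh\|_\infty,
$$
where $\cC$ is independent of $\nn$. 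Since the entries of the solution $\aa$ to \eqref{systmatrixG} are exactly $a_\ii=(f_\nn u)(\xx_\ii)$ by the Nyström correspondence recalled between \eqref{newsystem} and \eqref{interpolantG}, we obtain $\|\aa\|_\infty\leq \|f_\nn u\|_\infty\leq \cC\|\hh\|_\infty$. This gives $\|(\cI_\nn-\cU_\nn\cK_\nn\cU_\nn^{-1}\Lambda_\nn)^{-1}\|_\infty\leq\cC$ uniformly in $\nn$.

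The tricky ingredient, which I would flag explicitly, is the uniform boundedness of the interpolation operator used to build $g$ from nodal data in the weighted sup norm; this is standard for Jacobi nodes under \eqref{condparam} but should be cited. Combining the two bounds then yields $\kappa_\infty\leq \cC$, and repeating the argument verbatim with the anti-Gauss cubature — whose stability in $C_u$ and whose Nyström analogue \eqref{interpolantA} enjoy the same properties by Theorem~\ref{teo:convergenza} — completes the proof for \eqref{systmatrixA}.
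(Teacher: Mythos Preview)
The paper's own proof is a one-line citation to \cite{OccorsioRusso2011}, so there is no detailed argument to compare against. Your approach --- bounding the direct norm via stability of the cubature rule applied to $|k(\cdot,\yy)|/u(\cdot)$, and bounding the inverse norm by lifting nodal data to a function $g\in C_u$ and invoking uniform invertibility of $I-K_\nn$ on $C_u$ --- is exactly the standard Nystr\"om condition-number argument, and is in substance what the cited result does.

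Two points deserve tightening. First, the interpolation step is simpler than you make it, and your parenthetical is misleading: a weighted Lagrange interpolant at the Gaussian nodes does \emph{not} achieve $\|gu\|_\infty=\|\hh\|_\infty$ (its Lebesgue constant grows at least logarithmically), and \eqref{condparam} has nothing to do with Lebesgue constants --- it only ensures the anti-Gauss nodes lie in $[-1,1]$. But you do not need polynomial interpolation at all. Any continuous function $gu$ on $\cS$ with $(gu)(\xx_\ii)=h_\ii$, $\|gu\|_\infty=\|\hh\|_\infty$, and $gu|_{\partial\cS}=0$ (e.g., piecewise bilinear on the node grid extended by zero to the boundary) works, since the only thing you use about $g$ is the operator bound in the next step. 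So the ``tricky ingredient'' you flag is a non-issue.

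Second, what you actually invoke is the uniform operator bound $\sup_{\nn\geq\nn_0}\|(I-K_\nn)^{-1}\|_{C_u\to C_u}<\infty$. This is the content of the \emph{proof} of Theorem~\ref{teo:convergenza} (the collectively compact convergence argument), not of its literal statement, which also hypothesizes $g\in W_u^r$; your constructed $g$ will generally not lie in $W_u^r$. So rather than writing ``by Theorem~\ref{teo:convergenza} applied to this $g$'', cite the operator-level stability directly.
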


\begin{proof}
The proof follows the same idea of Theorem~3.1 from \cite{OccorsioRusso2011}.
\end{proof}

\subsection{The general case}

Let us first solve linear systems \eqref{systmatrixG} and
\eqref{systmatrixA} in the general case, that is, when
%The number of equations $N=n_1 n_2$ may be large and 
the coefficient tensor is not structured.
For the sake of clarity and brevity, from now on we will only refer to
system \eqref{systmatrixG} and set
$
\cF_\nn=\cI_\nn-\cU_\nn\cK_\nn\cU_\nn^{-1}\Lambda_\nn.
$
The same considerations will be valid for system \eqref{systmatrixA}
and the corresponding tensor $\widetilde{\cF}_{\nn+\bm{1}}$.
We note that even if the kernel is a symmetric function like, for instance,
$k(\xx,\yy) = \xx^2+\yy^2+\xx \yy$, the resulting coefficient tensor may be
not symmetric, that is, $(\cF_\nn)_{\ii,\jj}\neq(\cF_\nn)_{\jj,\ii}$, due to
the presence of the weight function $u$ and the Christoffel numbers. 

Before solving system \eqref{systmatrixG}, we rewrite it in
matrix form, i.e., we transform the matrices containing the unknowns and
the right-hand side into vectors, and represent the
multi-index tensor as a standard matrix. To do this, we employ the
lexicographical order to obtain
the matrix  $\bm{F}_N \in \R^{N \times N}$ given by
$$
(\bm{F}_N)_{\ell,k}=(\cF_\nn)_{\ii,\jj}, \quad \ell=i_1+(i_2-1)n_1, 
\quad k=j_1+(j_2-1)n_1.
$$
This process is known as \emph{matricization} or
unfolding~\cite{kolda2009}.
A similar procedure is applied to arrays $\bm{a}$ and $\bm{h}$ to obtain 
vectors $\bar{\bm{a}},\bar{\bm{h}}\in\R^{N}$, with $N=n_1n_2$, defined as
\begin{align*}
\bar{a}_k= a_{j_1,j_2}, \qquad \bar{h}_k= h_{j_1,j_2}, \qquad k=j_1+
(j_2-1)n_1,
\end{align*}
for $j_1=1,\dots,n_1$, $j_2=1,\dots,n_2$, and $k=1,\dots,N$, 
so that the system becomes
\begin{equation}\label{gensystem}
\bm{F}_N\bar{\bm{a}}=\bar{\bm{h}}.
\end{equation}

To solve system \eqref{gensystem}, we employ the generalized
minimal residual (GMRES) method~\cite{SaadSchultz86}.
The GMRES iterative method for the solution of the linear system \eqref{gensystem} is based on the Arnoldi partial
factorization
$\bm{F}_N Q_r=Q_{r+1}H_{r+1,r}$, for  $r=1,2,\ldots,N$,
where $Q_r=[q_1,q_2,\ldots,q_r]$ has orthonormal columns, with
$q_1=\bar{\bm{h}}/\|\bar{\bm{h}}\|$, and $H_{r+1,r}$ is an Hessenberg matrix;
$\|\cdot\|$ denotes the vector 2-norm.

At the $r$th iteration, GMRES approximates the solution of the
system as
$$
\bar{\bm{a}}^{(r)}
=\arg\min_{\bar{\bm{a}}\in K_r}\|\bm{F}_N\bar{\bm{a}}-\bar{\bm{h}}\|^2
=\min_{y\in\mathbb{R}^r}
\|H_{r+1,r} y-\|\bar{\bm{h}}\|\boldsymbol{\e}_1\|^2,
$$
where
$K_r=\sspan\{\bar{\bm{h}},\bm{F}_N\bar{\bm{h}},\ldots,\bm{F}_N^{r-1}\bar{\bm{h}}\}=\sspan\{q_1,\ldots,q_r\}$
is a Krylov space of dimension $r$.

Once the tensor $\cK_\nn$ has been computed,
this requires $2N^2$ floating point operations to assemble the
matrix $\bm{F}_N$ and a matrix-vector product at each iteration, leading
to a computational cost of $O((2+r)N^2)$.

The complexity can be slightly reduced by avoiding to assemble $\bm{F}_N$
and performing the product $\bm{F}_N\bm{q}_k$ at each iteration as
$
\bm{q}_k-\bm{u}\circ[K_N(\bm{d}\circ\bm{q}_k)],
$
where $\bm{u}_\jj=u(\bm{x}_\jj)$, $\bm{d}_\jj=\lambda_\jj/\bm{u}_\jj$, 
$K_N$ is the matricization of $\cK_\nn$, and $\circ$ denotes the
componentwise Hadamard product
$(\bm{a}\circ\bm{b})_\jj=\bm{a}_\jj\bm{b}_\jj$.
In this case the computational cost is $O(rN^2)$.
We will denote this approach with a factored coefficient matrix by GMRES-FM.

\subsection{The case of a separable kernel}

Let us assume that the kernel in \eqref{Fredholm} is separable, that is, $k(\xx,\yy) = k(x_1,x_2,y_1,y_2) = k_1(x_1,y_1) \, k_2(x_2,y_2)$.
%\begin{equation*}
%k(\xx,\yy) = k(x_1,x_2,y_1,y_2) = k_1(x_1,y_1) \, k_2(x_2,y_2).
%\end{equation*}
This means that $\cK_\nn=K^{(1)}_{n_1}\otimes K^{(2)}_{n_2}$, where
$K^{(1)}_{n_1}$ and $K^{(2)}_{n_2}$ are two square matrices of dimension $n_1$
and $n_2$, respectively, with $(K^{(1)}_{n_1})_{i_1,j_1} = k_1(x_{j_1}^{(1)},x_{i_1}^{(1)})$ and $(K^{(2)}_{n_2})_{i_2,j_2} = k_2(x_{j_2}^{(2)},x_{i_2}^{(2)}),
$
%\begin{equation*}
%(K^{(1)}_{n_1})_{i_1,j_1} = k_1(x_{j_1}^{(1)},x_{i_1}^{(1)}), \qquad
%(K^{(2)}_{n_2})_{i_2,j_2} = k_2(x_{j_2}^{(2)},x_{i_2}^{(2)}),
%\end{equation*}
and $\otimes$ denotes the Kronecker tensor product, that is,
$
(\cK_\nn)_{\ii,\jj} = (K^{(1)}_{n_1})_{i_1,j_1} 
(K^{(2)}_{n_2})_{i_2,j_2}.
$

Keeping into account that $u(\xx)=u_1(x_1) u_2(x_2)$ and
$\lambda_\jj=\lambda_{j_1}^{(1)}\lambda_{j_2}^{(2)}$, the system
\eqref{newsystem} becomes
\begin{equation*}
\sum_{j_1=1}^{n_1} \sum_{j_2=1}^{n_2}
\left[\delta_{i_1,j_1}\delta_{i_2,j_2} - \phi^{(1)}_{i_1,j_1}
\phi^{(2)}_{i_2,j_2} \right] \,
a_{j_1,j_2} = h_{i_1,i_2},
\end{equation*}
for $i_1=1,\ldots,n_1$ and $i_2=1,\ldots,n_2$,
with
\begin{equation*}
\phi^{(\ell)}_{i_\ell,j_\ell} =
\lambda^{(\ell)}_{j_\ell} \, 
\frac{u_\ell(x^{(\ell)}_{i_\ell})}{u_\ell(x^{(\ell)}_{j_\ell})}
\, (K^{(\ell)}_{n_\ell})_{i_\ell,j_\ell}, \qquad \ell=1,2.
\end{equation*}
This amounts to solving the Stein matrix equation
\begin{equation}\label{stein}
\Phi^{(1)} A (\Phi^{(2)})^T - A + H = 0,
\end{equation}
where $A=(a_{j_1,j_2})$, $H=(h_{i_1,i_2})$, and
$\Phi^{(\ell)}=(\phi^{(\ell)}_{i_\ell,j_\ell})$, for $\ell=1,2$.
There is a wide literature on numerical methods for solving this kind of
matrix equations, some classical references
are~\cite{barr1977,golub1979,hamm1982}.
We will use the \texttt{dlyap} function of MATLAB.

The structure of the Stein equation \eqref{stein} also allows for speeding
up the GMRES method and reducing the storage space.
Indeed, the product $\bm{F}_N\bm{q}_k$ can be expressed, at each
iteration, in the form
$
Q_k - \Phi^{(1)} Q_k (\Phi^{(2)})^T,
$
where the vector $\bm{q}_k$ is the unfolding of the matrix $Q_k$.
In this way, the number of floating point operations of a matrix-vector
product decreases from $O(N^2)$ to $O(N)$, as well as the storage space.
This implementation will be denoted in the following by GMRES-SK.

\section{Numerical results}\label{sec:tests}

In this section, we investigate the performance of the numerical methods
described through the paper. We analyze separately the approximation of
bivariate integrals and the numerical solution of Fredholm integral equations
of the second-kind.

\subsection{Approximation of integrals}

In the next two examples, we give a practical illustration of
the theoretical properties of the cubature rules presented in 
Section~\ref{sec:cubature}.
In both cases, the exact value $I(f)$ of the integral is not available.
We consider as exact the value $\cG_{512,512}(f)$, produced by the Gauss
cubature rule \eqref{Gn} when $n_1=n_2=512$.
The tables display the cubature errors 
$$
R^{(G)\strut}_{n_1,n_2}(f),\ R^{(A)}_{n_1,n_2}(f), \text{ and }
R^{[1]}_{n_1,n_2}(f),
$$
defined in equations \eqref{erreg}, \eqref{errea}, and
\eqref{erre1}, respectively.
In addition, 
we also report 
$$
R^{(Avg)}_{n_1,n_2}(f)=\cI(f)-\cG^{Avg}_{2n_1+1,2n_2+1}(f).
$$

\begin{example}\label{example1}
Let us consider the following integral
\begin{equation}\label{eq:example1}
\int_{-1}^1 \int_{-1}^1 |\sin(1-x_1)|^{\frac{9}{2}} (1+x_1+x_2) w(x_1,x_2) dx_1 dx_2,
\end{equation}
where $w$ is the weight function defined in \eqref{w} with $\alpha_1=\beta_1=-1/2$ and $\alpha_2=\beta_2=0$.
The integrand function is smooth with respect to the variable $x_2$, whereas
only its first four derivatives with respect to $x_1$ are continuous.
Hence, it is sufficient to use few points (for instance $n_2=8$) to approximate
the integral in $x_2$.
In Table \ref{tab:example1} we report the cubature errors for increasing values
of $n_1$. 
%\begin{equation*}
%R^{(Avg)}_{n_1,n_2}(f)=\cI(f)-\cG^{Avg}_{2n_1+1,2n_2+1}(f).
%\end{equation*}
From the third and fourth columns, we can see that the error provided by the
anti-Gauss rule is of the same magnitude of the error given by the Gauss rule
and opposite in sign.
This improves the accuracy of the averaged rule; see the fifth column. The
sixth column of the table demonstrates that formula
$\cG^{Avg}_{2n_1+1,2n_2+1}(f)$ is a good estimate for the Gauss rule error.

\begin{table}[ht]
\caption{Cubature errors for Example \ref{example1}.}
\label{tab:example1}
\begin{center}
\begin{tabular}{c c c c c c c c}
\hline
$n_1$ & $n_2$ & $R^{(G)\strut}_{n_1,n_2}(f)$ & $R^{(A)}_{n_1,n_2}(f)$ 
	& $R^{(Avg)}_{n_1,n_2}(f)$ & $R^{[1]}_{n_1,n_2}(f)$
	& $R^{(\text{KX$_2$})}_{n_1,n_2}(f)$ & $R^{(\text{KX$_3$})}_{n_1,n_2}(f)$ \\
\hline
%2 	& 8 	& 2.70e-01 	& -2.73e-01 	& -1.63e-03 	& 2.71e-01 &  
4 	& 8 	& 1.63e-03 	& -1.63e-03 	& 1.27e-07 	& 1.63e-03  & 2.90e-03 	& 1.85e-03 \\
8 	& 8 	& -1.27e-07 	& 1.27e-07 	& 1.22e-10 	& -1.27e-07 & 4.15e-04 	& 9.83e-04 \\ 
16 	& 8 	& -1.21e-10 	& 1.22e-10 	& 1.11e-13 	& -1.22e-10 & -4.48e-04 & 5.36e-05 \\ 
32 	& 8 	& -1.15e-13 	& 1.10e-13 	& -2.66e-15 	& -1.12e-13 & -1.04e-04 & 2.14e-06 \\ 
64 	& 8 	& -2.22e-15 	& -3.11e-15 	& -2.66e-15 	& 4.44e-16  & -9.23e-06 & - 	   \\
\hline
\end{tabular}
\end{center}
\end{table}

%\begin{table}[ht]
%\footnotesize
%\caption{Cubature errors for Example \ref{example1}.}
%\label{tab:example1}
%\begin{center}
%\begin{tabular}{c c c c c c}
%\hline
%$n_1$ & $n_2$ & $R^{(G)\strut}_{n_1,n_2}(f)$ & $R^{(A)}_{n_1,n_2}(f)$ & $R^{(Avg)}_{n_1,n_2}(f)$ & $R^{[1]}_{n_1,n_2}(f)$ \\
%\hline
%%2 	& 8 	& 2.70e-01 	& -2.73e-01 	& -1.63e-03 	& 2.71e-01 \\
%4 	& 8 	& 1.63e-03 	& -1.63e-03 	& 1.27e-07 	& 1.63e-03 \\
%8 	& 8 	& -1.27e-07 	& 1.27e-07 	& 1.22e-10 	& -1.27e-07 \\
%16 	& 8 	& -1.21e-10 	& 1.22e-10 	& 1.11e-13 	& -1.22e-10 \\
%32 	& 8 	& -1.15e-13 	& 1.10e-13 	& -2.66e-15 	& -1.12e-13 \\
%64 	& 8 	& -2.22e-15 	& -3.11e-15 	& -2.66e-15 	& 4.44e-16 \\
%\hline
%\end{tabular}
%\end{center}
%\end{table}

The graph on the left in Figure~\ref{fig:alfabeta} displays the two terms of
inequality \eqref{condsign} for $n_1=1,\ldots,30$ and $n_2=8$.
It shows that the assumption of Theorem~\ref{theorem2} is numerically verified,
ensuring the change of sign in the errors of the two cubature rules.

\begin{figure}[ht]
\includegraphics[width=0.48\textwidth]{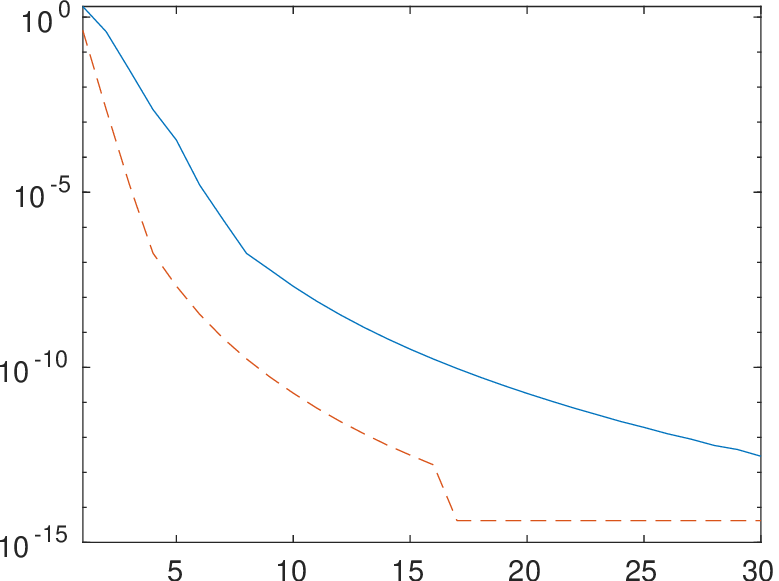}
\includegraphics[width=0.48\textwidth]{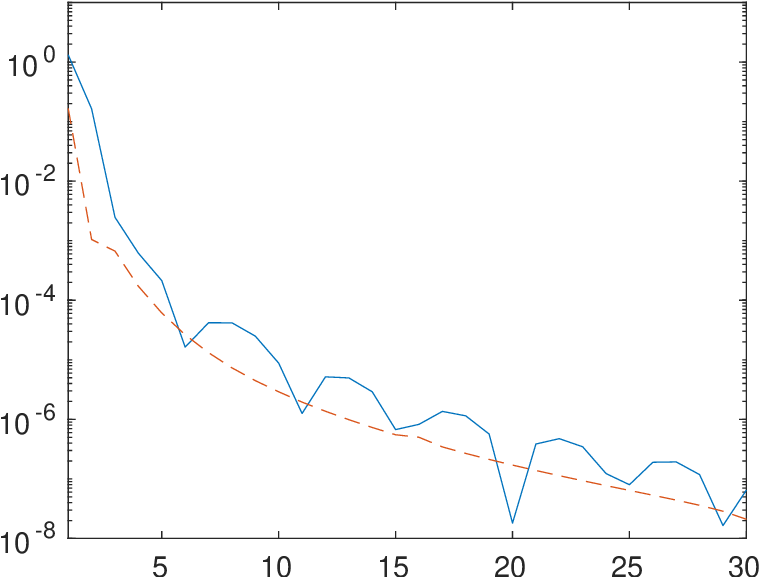}
\caption{Graph of the two terms in inequality \eqref{condsign} for
Example~\ref{example1} (left) and Example~\ref{example2} (right):
$|S_{n_1,n_2}|$ is represented by a continuous line,
$\max(|\cE^{(1)}_{n_1,n_2}|,|\cE^{(2)}_{n_1,n_2}|)$ by a dashed one.}
\label{fig:alfabeta}
\end{figure}

In the last two columns of Table~\ref{tab:example1}, we compare
our results with those obtained using the Gauss-Legendre-type quadrature rule
introduced in \cite{Kaneko1994} for weakly singular integrals.
The method focuses on evaluating the integral
$$I(f)=\int_0^1 f(x) dx,$$
where $f$ belongs to a class denoted by $\text{Type}(\alpha,k,S)$ containing
function with specific singularities at a finite number of points; we refer to
\cite{Kaneko1994} for more details about its definition.
The authors divide the integral into $m$ subintegrals, each one defined on
$[t_j,t_{j+1}]$, $j=0,\ldots,n$, constituting a partition of $[0,1]$.
A mapping is then introduced to transform $[t_j,t_{j+1}]$ to $[-1,1]$,
and the resulting integral is then approximated using a Gauss-Legendre formula
with $k$ nodes, yielding
\begin{equation}\label{eq:xu}
I_{n,k}(f)=\sum_{j=1}^{n-1} \frac{t_{j+1}-t_j}{2} 
\sum_{i=1}^k \lambda_i f\left(\frac{t_{i+1}-t_i}{2}u_i+\frac{t_{i+1}+t_i}{2}
\right),
\end{equation}
where $\lambda_i$ are the weights of the quadrature formula and $u_i$ are the zeros of the Legendre polynomial of degree $k$.
We implemented the algorithm and tested it on some of the examples reported
in~\cite{Kaneko1994}, reaching at least the same accuracy.

Successively, we applied the above formula to integral \eqref{eq:example1} by a
tensor product approach similar to the one used in our methods.
We note that the integrand function has singularities at $x_1=\pm 1$.
In order to follow the approach in \cite{Kaneko1994}, we first divided the
integral in $x_1$ into two parts, i.e.,
\begin{align*}
\int_{-1}^1&\left[ \int_{-1}^0 |\sin(1-x_1)|^{\frac{9}{2}} (1+x_1+x_2) \frac{1}{\sqrt{1-x_1^2}} dx_1 \right.\\ &+ \left. \int_{0}^1 |\sin(1-x_1)|^{\frac{9}{2}} (1+x_1+x_2) \frac{1}{\sqrt{1-x_1^2}} dx_1 \right] dx_2,
\end{align*}
separating the singularities.
The two integrand functions are of Type($-\frac{1}{2}$,$k$,$\{-1\})$ and
Type($-\frac{1}{2}$,$k$,$\{1\}$), respectively, according to the terminology of
\cite{Kaneko1994}.

As the integrand function is smooth with respect to the second variable,
we fixed $n_2=8$ to approximate the integral in $x_2$, and varied $n_1$ for the
integral in $x_1$.
The last two columns in Table \ref{tab:example1} presents the errors 
$R^{(\text{KX$_2$})}_{n_1,n_2}(f)$ and $R^{(\text{KX$_3$})}_{n_1,n_2}(f)$,
with respect to the reference solution $\cG_{512,512}(f)$, computed by setting 
$k=2$ and $k=3$ in \eqref{eq:xu}, respectively.

%\begin{table}[ht!]
%\footnotesize
%\caption{Errors for Example \ref{example1} by using the method in \cite{Kaneko1994}.}
%\label{tab:example1Xu}
%\begin{center}
%\begin{tabular}{c c c c}
%\hline
%$m$ & $n_2$ & $k=2$ & $k=3$ \\
%\hline
%4 	& 8 	& 2.90e-03 	& 1.85e-03 \\
%8 	& 8 	& 4.15e-04 	& 9.83e-04 \\
%16 	& 8 	& -4.48e-04 	& 5.36e-05 \\
%32 	& 8 	& -1.04e-04 	& 2.14e-06 \\
%64 	& 8 	& -9.23e-06 	& - 	   \\
%\hline
%\end{tabular}
%\end{center}
%\end{table}

As expected, the errors obtained by the method based on
\cite{Kaneko1994} decreases as the value of $n_1$ increases.
However, by comparing these results with the other columns in the table, it is
evident that the averaged formula proposed in this paper is significantly more
accurate, achieving an error of the order $10^{-15}$ with just $n_1=32$.
It is important to note that the number of nodes $n_1$ expresses the number of function evaluations for the averaged formula, while formula \eqref{eq:xu} performs $k(n_1-1)$ evaluations.

Finally, we observe that for $n_1=64$ and $k=3$ the method breaks
down because of an overflow, due to one of the nodes $t_j$ becoming too close
to the singularities.
\end{example}

\begin{example}\label{example2}
Let us consider the integral \eqref{int} with
$$
f(x_1,x_2)= x_1 \left|\cos\left(\frac{1}{2}-x_1\right)\right|^{\frac{3}{2}}+x_2 |\sin(1+x_2)|^{\frac{3}{2}},$$
and
$ w(x_1,x_2)=\sqrt{\frac{1-x_1^2}{1-x_2}}.$

In this case, the integrand function has a low smoothness with respect to both
variables. Then, to obtain a good approximation we need to increase both $n_1$
and $n_2$. In Table \ref{tab:example2}, we can see the computational advantage
of the averaged rule with respect to the Gauss scheme. To obtain an order error of $10^{-13}$, we have two choices: we may apply the averaged rule
with $n_1=n_2=128$, and this requires $n_1n_2+(n_1+1)(n_2+1)=33.025$ function
evaluations, or we may use the Gauss cubature formula with $n_1=n_2=256$.
In this case, we have to perform $n_1n_2=65.536$ function evaluations.

\begin{table}[ht]
\caption{Cubature errors for  Example \ref{example2}.}
\label{tab:example2}
\begin{center}
\begin{tabular}{c c c c c c}
\hline
$n_1$ & $n_2$ & $R^{(G)}_{n_1,n_2}(f)$ & $R^{(A)}_{n_1,n_2}(f)$ & $R^{(Avg)}_{n_1,n_2}(f)$ & $R^{[1]}_{n_1,n_2}(f)$ \\
\hline
%2 	& 2 	& -1.71e-01 	& 1.71e-01 	& -6.53e-05 	& -1.71e-01 \\
%4 	& 4 	& -7.14e-04 	& 7.19e-04 	& 2.45e-06 	& -7.16e-04 \\
8 	& 8 	& -1.53e-05 	& 1.55e-05 	& 9.05e-08 	& -1.54e-05 \\
16 	& 16 	& -4.66e-07 	& 4.72e-07 	& 2.98e-09 	& -4.69e-07 \\
32 	& 32 	& -1.49e-08 	& 1.51e-08 	& 9.62e-11 	& -1.50e-08 \\
64 	& 64 	& -4.73e-10 	& 4.79e-10 	& 3.07e-12 	& -4.76e-10 \\
128 	& 128 	& -1.49e-11 	& 1.51e-11 	& 1.13e-13 	& -1.50e-11 \\
256 	& 256 	& -4.51e-13 	& 4.84e-13 	& 1.60e-14 	& -4.67e-13 \\
%512 	& 512 	& -6.22e-15 	& 2.40e-14 	& 8.88e-15 	& -1.51e-14 \\
\hline
\end{tabular}
\end{center}
\end{table}

The graph on the right in Figure~\ref{fig:alfabeta} shows that for some values
of $n_1=n_2$ the assumption \eqref{condsign} of Theorem \ref{theorem2} is
violated.
However, numerical experiments show that the change of sign in the error
always happens. In particular, the graph shows that inequality
\eqref{condsign} is not verified when $n_1=n_2=20$, but we have
$R^{(G)}_{20,20}(f)=-1.54\cdot 10^{-07}$ and
$R^{(A)}_{20,20}(f)=1.56\cdot 10^{-07}$.
\end{example}

\subsection{Numerical solution of Fredholm integral equations of the second-kind}

In this section we numerically solve several integral equations of the
type \eqref{Fredholm} to investigate the performance of the method
presented in Section \ref{sec:nystrom} and Section \ref{sec:system}, and
support the theoretical analysis.  
For each example, we first identify the space $C_u$, in which the solution
is sought, according to Theorem \ref{teo:convergenza}, solve systems
\eqref{newsystem} and \eqref{systemanti}, compute the Nystr\"om
interpolant \eqref{interpolantG} and \eqref{interpolantA}, and calculate
the averaged Nystr\"om interpolant \eqref{averaged}. 
The algorithms were implemented in Matlab version 9.10 (R2021a), and the
numerical experiments were carried out on an Intel(R) Xeon(R) Gold 6136
server with 128 GB of RAM memory and 32 cores, running the Linux
operating system.

To test the accuracy, we compute the relative errors
\begin{align*}
\xi^{(G)}_{\nn}= \frac{\|f^*-f_{\nn}\|_{C_u}}{\|f^*\|_{C_u}},
\qquad
\xi^{(A)}_{\nn}= \frac{\|f^*-\tilde{f}_{\nn+\bm{1}}\|_{C_u}}{\|f^*\|_{C_u}},
\qquad
\xi^{(Avg)}_{\nn}=\frac{\|f^*-\mathfrak{f_\nn}\|_{C_u}}{\|f^*\|_{C_u}},
\end{align*} 
where the infinity norm is approximated on a grid of $50 \times 50$ points
and $f^*$ is the exact solution of the equation. If the solution is
unknown, then we consider the approximated solution obtained by
the Nystr\"om interpolant \eqref{interpolantG} for sufficiently large
$n_1$, $n_2$ as exact. The adopted value of $\nn=(n_1,n_2)$ will be specified case
by case. 

In our tests, we consider both separable and non-separable kernels.
When a low regularity of the kernel and/or the right-hand side yields the
necessity of increasing the size of the linear system, we explore the
efficiency of the proposed approaches for its solution methods, both in
terms of accuracy and of computational time.
In some examples, we report the $\infty$-norm condition numbers
$\kappa_{\infty}^{(G)}$ and $\kappa_{\infty}^{(A)}$ of systems
\eqref{systmatrixA} and \eqref{systmatrixG}, respectively, to confirm the
theoretical analysis of~Theorem \ref{teo:cond}. 

\begin{example}\label{test1}
Let us first test our method on an integral equation whose exact
solution is known. Consider the equation
\begin{equation*}
f(y_1,y_2)- \int_{-1}^1 \int_{-1}^1 x_2 y_2 \e^{x_1+y_1} f(x_1,x_2) dx_1 dx_2 =
g(y_1,y_2),
\end{equation*}
with right-hand side
$g(y_1,y_2) = \cos{(y_1+y_2)}-(\cos{2}+\e^2(\sin{2}-1))y_2 \e^{y_1-1}$ and
%\begin{equation*}
%g(y_1,y_2) = \cos{(y_1+y_2)}-(\cos{2}+\e^2(\sin{2}-1))y_2 \e^{y_1-1},
%\end{equation*}
solution $f(x_1,x_2)=\cos{(x_1+x_2)}$. Since the right-hand side and kernel
are smooth functions, we search for the solution in
the space $C_u$ with $u \equiv 1$, i.e., we set $\gamma_i=\delta_i=0$ for
$i=1,2$.

Table~\ref{tab:test1} displays the relative errors for increasing values
of $n_1=n_2$. As expected, since the kernel and right-hand side are
analytic functions, it shows a fast convergence. 
The averaged Nystr\"om interpolant allows to improve accuracy up to four
significant digits, with respect to the two Nystr\"om interpolants based, respectively, on the Gauss and anti-Gauss rules.    
Since the size of the system is small, in this example we solve the
linear systems by Gauss's method with column pivoting.
As highlighted by the last two columns of Table~\ref{tab:test1}, the two
systems are very well conditioned.

\begin{table}[ht]
\caption{Numerical results for Example \ref{test1}.}
\label{tab:test1}
\begin{center}
\begin{tabular}{c c c c c c}
\hline
$\nn$ & $\xi^{(G)}_{\nn}(f)$ & $\xi^{(A)}_{\nn}(f)$ & $\xi^{(Avg)}_{\nn}(f)$ & $\kappa_{\infty}^{(G)}$ & $\kappa_{\infty}^{(A)}$ \\
\hline
(2,2)  & 	 3.79e-02 & 	 3.30e-02 & 	 2.43e-03 & 2.678 & 	 8.504 \\
(4,4)   & 	 2.38e-06 & 	 2.38e-06 & 	 3.00e-10 & 19.016 & 	 30.849 \\
(6,6)   & 	 2.50e-11 & 	 2.50e-11 & 	 1.33e-15 & 30.308 & 	 36.235\\
(8,8)   & 	 5.55e-16 & 	 9.99e-16 & 	 7.22e-16 & 34.967 & 	 34.941\\
\hline
\end{tabular}
\end{center}
\end{table}

A plot of the pointwise errors for the Gauss and the anti-Gauss
interpolants is reported in Figure \ref{fig:test1}, for $\nn=(4,4)$, in
two different perspectives. It can be observed that the errors provided by the two cubature rules are of opposite sign, confirming the assertion of Theorem~\ref{teo:braketsol}.

\begin{figure}[ht]
\includegraphics[width=0.49\textwidth]{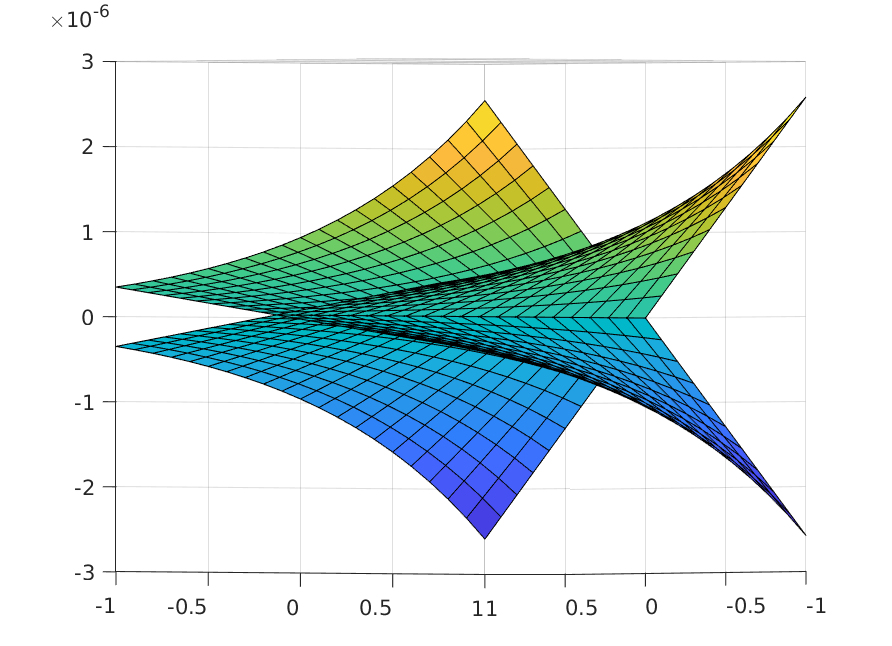}
\includegraphics[width=0.49\textwidth]{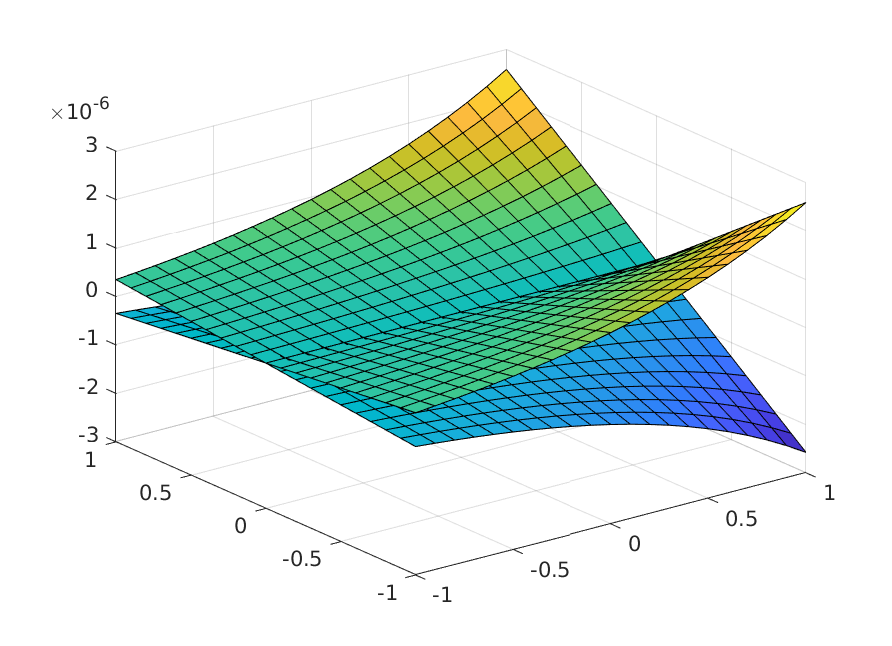}
\caption{Error graphs for Example \ref{test1}}
\label{fig:test1}
\end{figure}
\end{example} 

\begin{example}\label{test2}
In this example, we solve the integral equation 
\begin{equation*}
f(y_1,y_2)- \frac{3}{10} \int_{-1}^1 \int_{-1}^1 \sin{(x_2+x_1)}(1+x_1+y_2)
f(x_1,x_2) w(x_1,x_2) dx_1 dx_2 = g(y_1,y_2),
\end{equation*}
with $g(y_1,y_2) = \log{(2+y_2)} \sin{(\sqrt{1-y_1})}$ and
%\begin{equation*}
%g(y_1,y_2) = \log{(2+y_2)} \sin{(\sqrt{1-y_1})},
%\end{equation*}
$w(x_1,x_2)=\sqrt{1-x_1^2}$ ($\alpha_1=\frac{1}{2}$,
$\beta_1=\frac{1}{2}$, $\alpha_2=0$, and $\beta_2=0$).
According to Theorem~\ref{teo:convergenza}, we fix $\gamma_1=1$,
$\delta_1=\frac{5}{4}$, $\gamma_2=\frac{2}{3}$, and $\delta_2=\frac{2}{3}$
for the weight $u$ of the function space.
Here, the exact solution $f^*$  is not available, so we approximate it by
the Nystr\"om interpolant based on the Gaussian formula with $\nn=(700,32)$.
The kernel is a smooth non-separable function whereas, for each
fixed $y_2$, $g_{y_2}(y_1) \in W_3$; therefore, by virtue of
\eqref{error}, the expected order of convergence is $O(n_1^{-3})$. 
Note that since the right-hand side has a different degree of smoothness
with respect to the two variables, we can use a number of nodes $n_2$ much
smaller than $n_1$, thus reducing the number of equations of the system.
However, the low smoothness of the right-hand side causes $n_1$ to grow.
So the size of the linear systems is moderately large, and
we solve them by the GMRES-FM method, that is, the implementation with a
factored coefficient matrix.

Table~\ref{tab:test2} reports the obtained relative errors. In this
example the good performance of the averaged interpolant in term of accuracy
is evident.
To compute it, when $\nn=(128,16)$, we have to solve
two linear systems of order $128\cdot 16=2048$, with
an error of order $10^{-11}$. The same error is produced by the
Nystr\"om method based on the Gauss rule, as reported in
Table~\ref{tab:test2}, but this requires to solve a system of order
$256\cdot 16=4096$, and so a much larger complexity and storage space.

We see that GMRES-FM converges in few iterations (reported, in parentheses, in the
second and third columns) and it is clear that the order of the system has no
effect on the speed of convergence.
In accordance with Theorem~\ref{teo:cond}, this happens because the
condition number of the coefficient matrices is small and does not depend
on the size of the systems; see the last two columns of
Table~\ref{tab:test2}.

\begin{table}[ht]
\caption{Numerical results for Example \ref{test2}.}
\label{tab:test2}
\begin{center}
\begin{tabular}{c c c c c c }
\hline
$\nn$ & $\xi^{(G)}_{\nn}(f)$ (iter) & $\xi^{(A)}_{\nn}(f)$ (iter) & $\xi^{(Avg)}_{\nn}(f)$ & $\kappa_{\infty}^{(G)}$ & $\kappa_{\infty}^{(A)}$\\
\hline
%(2,16) & 	 8.12e-03 (3) & 	 7.55e-03 (3) & 	 2.86e-04 & 4.027 & 	 26.364 \\
%(4,16) & 	 4.77e-04 (3) & 	 4.22e-04 (3) & 	 2.78e-05 & 20.280 & 	 39.241 \\
%(8,16) & 	 4.26e-05 (3) & 	 3.74e-05 (3) & 	 2.61e-06 & 26.498 & 	 48.749 \\
(16,16) & 	 3.28e-06 (3) & 	 2.88e-06 (3) & 	 2.04e-07 & 32.148 & 	 51.621 \\
(32,16) & 	 2.30e-07 (3) & 	 2.01e-07 (3) & 	 1.44e-08 & 36.045 & 	 54.606 \\
(64,16) & 	 1.53e-08 (3) & 	 1.34e-08 (3) & 	 9.52e-10 & 38.933 & 	 56.108 \\
(128,16) & 	 9.82e-10 (3) & 	 8.62e-10 (3) & 	 6.03e-11 &  40.998 & 	 57.277 \\
(256,16) & 	 6.13e-11 (3) & 	 5.57e-11 (3) & 	 2.78e-12 & 	 42.433 & 	 58.044 \\
(512,16) & 	 2.80e-12 (3) & 	 4.57e-12 (3) & 	 8.80e-13 & 43.442 & 	 58.591 \\
\hline
\end{tabular}
\end{center}
\end{table}
\end{example}

\begin{example}\label{test3}
Let us now consider the following equation with a separable kernel
\begin{equation*}
f(y_1,y_2)- \tfrac{3}{10} \int_{-1}^1 \int_{-1}^1
\e^{-(1+x_1)(1+y_1)-(1+y_2)(1+x_2)}  f(x_1,x_2) w(x_1,x_2) dx_1 dx_2 =
g(y_1,y_2),
\end{equation*}
with a right-hand side $g(y_1,y_2) = \cos{(3+y_2)} \, (1+y_2)^\frac{3}{2}  \, \sin\left((1-y_1)^\frac{3}{2}\right)$ characterized by a low degree of smoothness with
respect to both variables, 
%\begin{equation*}
%g(y_1,y_2) = \cos{(3+y_2)} \, (1+y_2)^\frac{3}{2}  \, \sin\left((1-y_1)^\frac{3}{2}\right), 
%\end{equation*}
and $w(x_1,x_2)=\sqrt{(1-x_1^2)(1-x_2^2)}$ with
$\alpha_1=\beta_1=\frac{1}{2}$ and $\alpha_2=\beta_2=\frac{1}{2}$.
For the weight $u$, we set $\gamma_1=\delta_1=\frac{5}{4}$ and
$\gamma_2=\delta_2=\frac{5}{4}$.

In this test, we investigate the computational time required for solving the
linear systems by Gauss's method ($PA=LU$) and the four approaches described in
the previous section: GMRES, GMRES-FM, where the coefficient matrix is
multiplied in a factored form, GMRES-SK, specially suited for the case of a
separable kernel, and the solution of Stein's equation \eqref{stein} by the
\texttt{dlyap} function of MATLAB.

As highlighted in Table \ref{tab:test3_tempi}, the application of Gauss's
method, the standard implementation of GMRES, and GMRES-FM, are unfeasible when
the system becomes moderately large.
Moreover, the first three methods go out of memory when $n_1,n_2>128$.
On the contrary, GMRES-SK has a good performance and the computational time is
comparable with that of MATLAB solver function \texttt{dlyap}. Both methods can
be applied for large problem dimensions.

\begin{table}[ht]
\caption{Computing times in seconds for Example \ref{test3}.}
\label{tab:test3_tempi}
\begin{center}
\begin{tabular}{c c c c c c}
\hline
$\nn$ & $PA=LU$ & GMRES  & GMRES-FM & GMRES-SK & \texttt{dlyap} \\
\hline 
%(2,2) & 0.0031 &   0.0044  &  0.0038 &   0.0048  &  0.0097 \\
%(4,4) &    0.0061 &   0.0068  &  0.0067 &   0.0063  &  0.0064\\
%(8,8) &    0.0265  &  0.0234   & 0.0248  &  0.0230  &  0.0246\\
(16,16) &    0.0784 &   0.0849  &  0.0864 &   0.0716  &  0.0705\\
(32,32) &    0.3703  &  0.3070  &  0.2928  &  0.1905  &  0.1736\\
(64,64) &    7.9212   & 7.2356  &  3.7162  &  2.8094  &  3.0727\\
(128,128) &   59.0927  & 41.9451  & 18.9175 &   8.3955  &  8.4749\\
(256,256) &      -   &   -    &   - &  26.8196 &  26.1663\\
(512,512) & -  &   -  &      - & 128.1439  & 121.2557\\
\hline
\end{tabular}
\end{center}
\end{table}
 
Table \ref{tab:test3} reports the relative errors with respect to the
approximation obtained setting $\nn=(512,512)$, which we consider exact.
The linear system is solved by the GMRES-SK method.
The averaged Nystr\"om interpolant provides 2 additional significant
digits with respect to the base interpolants starting from $\nn=(4,4)$,
until it reaches machine precision for $\nn=(128,128)$, while the
approximation based on the standard Gauss cubature rule produces the
same approximation for $\nn=(256,256)$.

It is also important to remark that, if the assertion of
Theorem~\ref{teo:braketsol} holds, the halved difference between the 
Gauss and anti-Gauss interpolants yields a bound for the
approximation error of the averaged interpolant, that is,
$$
\|f^*-\mathfrak{f_\nn}\|_\infty \leq
\frac{\|f_{\nn}-\tilde{f}_{\nn+\bm{1}}\|_\infty}{2}.
$$
Such a bound is not directly
available when a single formula is employed.

\begin{table}[ht]
\caption{Numerical results for Example \ref{test3}.}
\label{tab:test3}
\begin{center}
\begin{tabular}{c c c c }
\hline
$\nn$ & $\xi^{(G)}_{\nn}(f)$ & $\xi^{(A)}_{\nn}(f)$ & $\xi^{(Avg)}_{\nn}(f)$ \\
\hline 
%(2,2) & 	 3.41e-03 & 	 3.50e-03 & 	 1.41e-04  \\
%(4,4) & 	 1.80e-05 & 	 1.78e-05 & 	 1.33e-07 \\
%(8,8) & 	 2.48e-07 & 	 2.40e-07 & 	 3.97e-09 \\
(16,16) & 	  5.60e-09 & 	 5.42e-09 & 	 8.77e-11 \\
(32,32) & 	 1.05e-10 & 	 1.02e-10 & 	 1.64e-12 \\
(64,64) & 	 1.80e-12 & 	 1.74e-12 & 	 2.81e-14 \\
(128,128) & 	 2.94e-14 & 	 2.87e-14 & 	 5.29e-16 \\
(256,256) & 	 8.82e-16 & 	 9.71e-16 & 	 2.65e-16 \\
\hline
\end{tabular}
\end{center}
\end{table}
\end{example}

\begin{example}\label{test4}
In this example, we analyze the effect of a smooth right-hand side and  a
kernel which is not smooth with respect to the first variable. Hence, we apply
our method to the equation
\begin{equation*}
f(y_1,y_2)-\frac{1}{7} \int_{-1}^1 \int_{-1}^1 (x_2+y_2)|\cos
(1+x_1)|^{\frac{9}{2}} f(x_1,x_2)w(x_1,x_2) dx_1 dx_2 = g(y_1,y_2),
\end{equation*}
where $g(y_1,y_2) = \e^{y_1} \sin{y_2}$, $w(x_1,x_2)=\frac{\sqrt{1-x_2^2}}{\sqrt{1-x_1}}$ ($\alpha_1=-\frac{1}{2}$, $\beta_1=0$, $\alpha_2=\frac{1}{2}$, and $\beta_2=\frac{1}{2}$), and we fix $\gamma_1=0$, $\delta_1=\frac{1}{4}$, $\gamma_2=\frac{1}{2}$, and
$\delta_2=\frac{5}{4}$, for the weight $u$ of the function space defined
in~\eqref{u}. Also in this case the exact solution $f(x_1,x_2)$ is not
available, so we approximate it by the Nystr\"om interpolant based on the Gauss
rule with $\nn=(512,32)$.

Table \ref{tab:test4} reports in the second and third columns the
numerical errors provided by the Gauss and anti-Gauss Nystr\"om methods, respectively.
The results are better than the theoretical estimate, which is of order
$O(n_1^{-4})$.
The accuracy of the averaged interpolant improves of 1--2 significant
digits, until machine precision is reached.

\begin{table}[ht]
\caption{Numerical results for Example \ref{test4}}
\label{tab:test4}
\begin{center}
\begin{tabular}{c c c c }
\hline
$\nn$ & $\xi^{(G)}_{\nn}(f)$ & $\xi^{(A)}_{\nn}(f)$ & $\xi^{(Avg)}_{\nn}(f)$ \\
\hline 
%(2,16) & 	 7.52e-04 & 	 6.85e-04 & 	 3.34e-05 \\
%(4,16) & 	 1.33e-05 & 	 1.35e-05 & 	 8.22e-08 \\
%(8,16) & 	 1.87e-07 & 	 1.79e-07 & 	 3.70e-09 \\
(16,16) & 	 4.71e-09 & 	 4.92e-09 & 	 1.05e-10 \\
(32,16) & 	 8.90e-11 & 	 8.99e-11 & 	 4.97e-13 \\
(64,16) & 	 5.44e-13 & 	 6.32e-13 & 	 4.39e-14 \\
(128,16) & 	 2.49e-14 & 	 2.65e-14 & 	 8.34e-16 \\
%(256,16) & 	 9.12e-16 & 	 1.17e-15 & 	 1.56e-16 \\
\hline
\end{tabular}
\end{center}
\end{table}
\end{example}

\section{Conclusion and extensions}\label{sect:Concl}

This paper introduces a new anti-Gauss cubature rule and proposes its
application to the resolution of Fredholm integral equations of the
second kind defined on the square. A Nystr\"om-type method is developed,
based on Gauss and anti-Gauss cubature rules, its stability and convergence 
are analyzed, and an averaged Nystr\"om interpolant is
proposed to better approximate the solution of the problem. Numerical
tests investigate the performance of the methods and confirm the
computational advantage of the averaged Nystr\"om interpolant, in
comparison with the classical approach based on the Gauss rule.
Extensions to other averaged cubature formulae are presently being
developed by the authors.  

\section*{Acknowledgements}
The authors are members of the Gruppo Nazionale Calcolo Scientifico-Istituto Nazionale di Alta Matematica (GNCS-INdAM) and are partially supported by the the INdAM-GNCS project 2024 ``Algebra lineare numerica per problemi di grandi dimensioni: aspetti teorici e applicazioni''. L. Fermo and G. Rodriguez are also partially supported by Fondazione di
Sar\-de\-gna, Progetto biennale bando 2021, ``Computational Methods and
Networks in Civil Engineering (COMANCHE)'', by the PRIN-PNRR 2022 project no. 2022ANC8HL, and by the PRIN-PNRR 2022 project no. P20229RMLB financed by the European Union - NextGeneration EU and by the Italian Ministry of University and Research (MUR).
P. D\'iaz de Alba gratefully acknowledges Fondo Sociale Europeo REACT EU -
Programma Operativo Nazionale Ricerca e Innovazione 2014-2020 and Ministero
dell'Università e della Ricerca for the financial support. 

\bibliographystyle{plain}
\bibliography{biblio}

\end{document}